\newcommand{\R}{\mathbb{R}}
\newcommand{\N}{\mathbb{N}}
\newcommand{\mc}[1]{\mathcal{#1}}
\newcommand{\ur}[1]{\mathrm{#1}}
\newcommand{\ure}{\ur e}
\newcommand{\eps}{\varepsilon}
\newcommand{\gt}{>}
\newcommand{\lt}{<}
\DeclareMathOperator{\id}{id}
\newcommand{\defs}{\coloneqq}
\newcommand{\sfed}{\eqqcolon}
\newcommand{\ra}{\rightarrow}
\newcommand{\sea}{\searrow}
\newcommand{\ol}{\overline}
\newcommand{\ul}{\underline}
\newcommand{\ds}{\,\mathrm{d}s}
\newcommand{\dr}{\,\mathrm{d}r}
\newcommand{\dsigma}{\,\mathrm{d}\sigma}
\newcommand{\drho}{\,\mathrm{d}\rho}
\DeclareMathOperator{\sign}{sign}
\newcommand{\embed}{\hookrightarrow}
\newcommand{\hp}{\hphantom}
\newcommand{\pe}{\mathrel{\hp{=}}}
\newcommand{\tmax}{T_{\max}}
\newcommand{\intom}{\int_\Omega}
\newcommand{\Ombar}{\ol \Omega}
\newcommand{\spaceused}{\Omega}
\newcommand{\mainspace}[1]{\renewcommand{\spaceused}{#1}}
\newcommand{\leb}[2][\spaceused]{\ensuremath{L^{#2}(#1)}}
\newcommand{\sob}[3][\spaceused]{\ensuremath{W^{#2, #3}(#1)}}
\newcommand{\sobn}[3][\spaceused]{\ensuremath{W_N^{#2, #3}(#1)}}
\newcommand{\con}[2][\ol \spaceused]{\ensuremath{C^{#2}(#1)}}
\newcommand{\pu}{\mathbbmss p}
\newcommand{\qu}{\mathbbm q}
\newcommand{\tops}{\texorpdfstring}
\renewcommand{\paragraph}[2][.]{\textbf {#2#1}}
\renewenvironment{proof}[1][\proofname]{\par
  \pushQED{\qed}%
  \normalfont \topsep0\p@\relax
  \trivlist
  \item[\hskip\labelsep\scshape
  #1\@addpunct{.}]\ignorespaces
}{%
  \popQED\endtrivlist\@endpefalse
}
\newtheorem{base}{Base}[section]
\numberwithin{equation}{section}
\theoremstyle{plain}
\newtheorem{theorem}[base]{Theorem} \newtheorem*{theorem*}{Theroem}
\newtheorem{lemma}[base]{Lemma} \newtheorem*{lemma*}{Lemma}
\newtheorem{prop}[base]{Proposition} \newtheorem*{prop*}{Proposition}
 \newtheorem*{cor*}{Corollary}
\theoremstyle{definition}
 \newtheorem*{definition*}{Definition}
 \newtheorem*{example*}{Example}
 \newtheorem*{cond*}{Condition}
\newtheorem{remark}[base]{Remark} \newtheorem*{remark*}{Remark}
\newcounter{localconst}[base]
\newcommand{\newlc}[2][]{%
\refstepcounter{localconst}%
\ltx@label{lc:\thesection:\arabic{base}:#2}%
c_{\ref*{lc:\thesection:\arabic{base}:#2}}}
\newcommand{\lc}[2][]{c_{\ref*{lc:\thesection:\arabic{base}:#2}}}
\begin{document}
\setkomafont{title}{\normalfont \Large}
\title{On the optimality of upper estimates near blow-up in quasilinear Keller–Segel systems}
\author{Mario Fuest\footnote{fuestm@math.uni-paderborn.de}\\
{\small Institut f\"ur Mathematik, Universit\"at Paderborn,}\\
{\small 33098 Paderborn, Germany}
}
\date{}

\maketitle

\KOMAoptions{abstract=true}
\begin{abstract}
  \noindent
  Solutions $(u, v)$ to the chemotaxis system
  \begin{align*}
    \begin{cases}
      u_t = \nabla \cdot ( (u+1)^{m-1} \nabla u - u (u+1)^{q-1} \nabla v), \\
      \tau v_t = \Delta v - v + u
    \end{cases}
  \end{align*}
  in a ball $\Omega \subset \mathbb R^n$, $n \ge 2$,
  wherein $m, q \in \mathbb R$ and $\tau \in \{0, 1\}$ are given parameters with $m - q > -1$,
  cannot blow up in finite time provided $u$ is uniformly-in-time bounded in $L^p(\Omega)$
  for some $p > p_0 \defs \frac n2 (1 - (m - q))$.\\[0.5pt]
  For radially symmetric solutions,
  we show that, if $u$ is only bounded in $L^{p_0}(\Omega)$ and the technical condition $m > \frac{n-2 p_0}{n}$ is fulfilled,
  then, for any $\alpha > \frac{n}{p_0}$, there is $C > 0$ with
  \begin{align*}
    u(x, t) \leq C |x|^{-\alpha} \qquad \text{for all $x \in \Omega$ and $t \in (0, T_{\max})$},
  \end{align*}
  $T_{\max} \in (0, \infty]$ denoting the maximal existence time.
  This is essentially optimal in the sense that, if this estimate held for any $\alpha < \frac{n}{p_0}$,
  then $u$ would already be bounded in $L^{p}(\Omega)$ for some $p > p_0$.
  \\[0.5pt]
  Moreover, we also give certain upper estimates for chemotaxis systems with nonlinear signal production,
  even without any additional boundedness assumptions on $u$.\\[0.5pt]
  The proof is mainly based on deriving pointwise gradient estimates for solutions of 
  the Poisson or heat equation with a source term uniformly-in-time bounded in $L^{p_0}(\Omega)$.\\[0.5pt]
  \textbf{Key words:} {blow-up profile, nonlinear diffusion, gradient estimates, chemotaxis}\\
  \textbf{AMS Classification (2020):} {35B40 (primary); 35K40, 35K65, 92C17 (secondary)}
\end{abstract}

\section{Introduction}
In the first and main part of the present article, we establish pointwise upper gradient estimates for solutions to 
\begin{align}\label{prob:v}
  \begin{cases}
    \tau v_t = \Delta v - v + g               & \text{in $\Omega \times (0, T)$}, \\
    \partial_\nu v = 0                        & \text{on $\partial \Omega \times (0, T)$}, \\
    v(\cdot, 0) = v_0 \text{ if $\tau \gt 0$} & \text{in $\Omega$},
  \end{cases}
\end{align}
where $\Omega = B_R(0)$, $R \gt 0$, is an $n$-dimensional ball,
$\tau \ge 0$, $T \in (0, \infty)$
and $v_0$ and $g$ are sufficiently smooth given functions on $\Omega$ and $\Omega \times (0, T)$, respectively.
Elliptic or parabolic regularity theory (cf.\ Lemma~\ref{lm:ell_v_sob2p} and Lemma~\ref{lm:v_sob1p} below) and embedding theorems
warrant that, if $g$ is uniformly-in-time bounded $\leb\qu$ for some $\qu \in [1, n]$,
then $v$ is uniformly-in-time bounded in $\sob1p$ for all $p \in [1, \frac{n\qu}{n-\qu})$.

An estimate of the form
\begin{align}\label{eq:intro:nabla_v_pw}
  |\nabla v(x, t)| \le C_\beta |x|^{-\beta}
  \qquad \text{for all $x \in \Omega$ and $t \in (0, T)$}
\end{align}
for some $\beta \lt \frac{n-\qu}{\qu}$ would imply
\begin{align*}
      \sup_{t \in (0, T)} \intom |\nabla v(\cdot, t)|^p
  \le C_\beta^p \omega_{n-1} \int_0^R r^{n-1 - p \beta} \dr
  \lt \infty
\end{align*}
for all $p \in (0, \frac{n}{\beta})$ and hence in particular for $p = \frac{\frac{n}{\beta} + \frac{n \qu}{n-\qu}}{2} \gt \frac{n\qu}{n-\qu}$.
Thus, assuming that the uniform-in-time bounds discussed above are optimal,
such an estimate should not be obtainable if one only requires $\sup_{t \in (0, T)} \|g(\cdot, t)\|_{\leb\qu}$ to be finite.
However, we achieve \eqref{eq:intro:nabla_v_pw} for all $\beta \gt \frac{n-\qu}{\qu}$.
We conjecture that this estimate, possibly up to equality therein, is optimal.

In the elliptic case, the corresponding proof is quite short:
In Section~\ref{sec:elliptic},
we first derive an $L^\qu$~bound for $\Delta v$ and then make use of the symmetry assumption to obtain
\begin{prop}\label{prop:main_e}
  Let $n \ge 2$, $R \gt 0$, $\Omega \defs B_R(0) \subset \R^n$,
  $M \gt 0$, $\qu \in [1, n]$ and $\beta \ge \frac{n-\qu}{\qu}$.
  There is $C \gt 0$ such that whenever
  $g \in \con0$ is a radially symmetric function fulfilling
  \begin{align}\label{eq:cond_g_e}
    \|g\|_{\leb{\qu}} \le M
  \end{align}
  and $v \in \con2$ solves
  \begin{align}\label{prob:v_e}
    \begin{cases}
      0 = \Delta v - v + g & \text{in $\Omega$}, \\
      \partial_\nu v = 0   & \text{on $\partial \Omega$},
    \end{cases}
  \end{align}
  then
  \begin{align}\label{eq:main_e:statement}
    |\nabla v(x)| \le C |x|^{-\beta} \qquad \text{for all $x \in \Ombar$.}
  \end{align} 
\end{prop}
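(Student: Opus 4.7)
The plan is as follows. First, I would bootstrap from the hypothesis on $g$ to an $L^\qu$~bound for $\Delta v$ itself: since $\Delta v = v - g$, it suffices to bound $v$ in $L^\qu$, which follows from standard elliptic theory for the Neumann problem with $L^\qu$ right-hand side (e.g.\ Lemma~\ref{lm:ell_v_sob2p}, or even just testing the equation with $|v|^{\qu-2}v$). Concretely, one obtains $\|\Delta v\|_{\leb\qu} \le C M$ with $C$ depending only on $n$, $R$ and $\qu$.

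Next I would exploit radial symmetry. Writing $v = v(r)$ with $r = |x|$, the equation $\Delta v = v - g$ becomes $(r^{n-1} v'(r))' = r^{n-1}(v - g)(r)$, and integrating from $0$ to $r$ (using that $r^{n-1} v'(r) \to 0$ as $r \to 0$ by smoothness of $v$) yields the representation
\begin{align*}
  r^{n-1} v'(r) = \int_0^r s^{n-1} \Delta v(s) \ds,
  \qquad r \in (0, R].
\end{align*}
Since $|\nabla v(x)| = |v'(|x|)|$ for radial $v$, the proposition reduces to a pointwise estimate for $v'(r)$.

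Now I would apply Hölder's inequality to this identity, splitting the integrand $s^{n-1} = s^{(n-1)/\qu} \cdot s^{(n-1)/\qu'}$ so that the first factor can be paired with $|\Delta v(s)|$ and absorbed by the $L^\qu$~bound via $\omega_{n-1} \int_0^R s^{n-1} |\Delta v|^\qu \ds = \|\Delta v\|_{\leb\qu}^\qu$, while the second factor contributes $(\int_0^r s^{n-1}\ds)^{1/\qu'} = C r^{n/\qu'}$. This gives
\begin{align*}
  r^{n-1} |v'(r)| \le C \|\Delta v\|_{\leb\qu} r^{n/\qu'},
  \qquad \text{i.e.,} \qquad
  |v'(r)| \le C r^{1 - n/\qu} = C r^{-(n-\qu)/\qu}.
\end{align*}
The endpoint $\qu = 1$ works directly with the sup norm instead of Hölder, and the endpoint $\qu = n$ just yields a uniform bound.

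Finally, since $(n-\qu)/\qu \le \beta$ and $r \le R$, we may absorb the gap into the constant via $r^{-(n-\qu)/\qu} \le R^{\beta - (n-\qu)/\qu} r^{-\beta}$ to conclude~\eqref{eq:main_e:statement}. I do not foresee a serious obstacle; the only mild subtlety is establishing the $L^\qu$~bound on $v$ (and hence $\Delta v$) in a way that is uniform in the prescribed class of $g$, but this is exactly the content of the cited elliptic regularity lemma.
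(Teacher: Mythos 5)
Your proposal is correct and follows essentially the same route as the paper: first an $L^{\qu}$ bound for $\Delta v$ obtained by testing the equation (the paper tests with $v^{\qu-1}$ and then with $-\Delta v|\Delta v|^{\qu-2}$, which amounts to your triangle-inequality observation $\Delta v = v - g$), then the radial identity $r^{n-1}v_r(r)=\int_0^r\rho^{n-1}\Delta v(\rho)\,\mathrm{d}\rho$ combined with exactly the same Hölder splitting and the final absorption $r^{-(n-\qu)/\qu}\le R^{\beta-(n-\qu)/\qu}r^{-\beta}$. No gaps.
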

 
In principle, one could argue similarly in the parabolic setting,
although one would at least need to require $v_0 \in \sob2\qu$ with $\partial_\nu v_0 = 0$ on $\partial \Omega$ in the sense of traces---%
or $v$ cannot be uniformly-in-time bounded in $\sob2\qu$.
Not wanting to impose such an unnatural requirement, we argue differently and rely on various semigroup estimates, 
which are introduced in Section~\ref{sec:semigroup}, instead.

For $\qu \in (1, \frac n2]$, we can follow \cite[Section~3]{WinklerBlowupProfilesLife}, where corresponding estimates have been derived for $\qu = 1$.
The main idea is to notice that $z \defs \zeta^\beta v$, where $\zeta(x) \approx |x|$, solves a certain initial boundary value problem
and then make use of several semigroup estimates to obtain an $L^\infty$ bound for $\nabla z$---%
which in turn together with pointwise upper bounds for $v$ (cf.\ Lemma~\ref{lm:v_upper_est}) implies~\eqref{eq:intro:nabla_v_pw}.

However, these arguments rely in several places on the fact
that $\qu \in (1, \frac n2]$ and $\beta \gt \frac{n-\qu}{\qu}$ imply $\beta \gt 1$ and hence $\zeta^\beta \in \con1$.
Switching to radial notation, this for instance means that $z_r(0, \cdot) \equiv 0$.
For $\qu \in (\frac n2, n]$ and thus possibly $\beta \in (0, 1)$, this is no longer the case.
We overcome this problem by considering (for $\qu \in (\frac n2, n]$)
\begin{align}\label{eq:intro:z}
  z(x, t) \defs \zeta^\beta(x) (v(x, t) - v(0, t)), \qquad (x, t) \in \Ombar \times [0, T),
\end{align}
instead.
Due to uniform-in-time Hölder bounds (see Lemma~\ref{lm:v_hoelder}),
we then obtain $z_r(0, \cdot) \equiv 0$
and an $L^\infty$~bound for $\nabla z$ again implies \eqref{eq:intro:nabla_v_pw}.
On the other hand, compared to $\zeta^\beta v$, a new problem arises for $z$ defined as in \eqref{eq:intro:z}:
The time derivative of $z$ now additionally includes $\zeta^\beta v_t(0, \cdot)$.
In order to handle this term, we first derive time Hölder bounds for $v$ in Lemma~\ref{lm:v_time_hoelder}
and then apply more subtle semigroup arguments as in the case of $\qu \in (1, \frac n2]$ in Lemma~\ref{lm:nabla_v_pw}.

Finally, we arrive at
\begin{theorem}\label{th:main_p}
  Let $n \ge 2$, $R \gt 0$, $\Omega \defs B_R(0) \subset \R^n$.
  For every $M \gt 0$, $\qu \in (1, n]$, $\beta \gt \frac{n-\qu}{\qu}$ and $p_0 \gt \max\{\frac{n}{\beta}, 1\}$,
  there is $C \gt 0$ with the following property:
  Suppose $\tau \gt 0$, $T \in (0, \infty]$ and that
  \begin{align}\label{eq:cond_v0_p}
    v_0 \in \con0 \text{ is radially symmetric and nonnegative with }
    \|v_0\|_{\sob1{p_0}} + \||x|^{\beta} \nabla v_0\|_{\leb\infty} \le M
  \end{align}
  as well as
  \begin{align}\label{eq:cond_g_p}
    g \in C^0(\Ombar \times [0, T)) \text{ is radially symmetric with }
    \sup_{t \in (0, T)} \|g(\cdot, t)\|_{\leb\qu} \le M.
  \end{align}
  Then
  \begin{align}\label{eq:main_p:statement}
    |\nabla v(x, t)| \le C |x|^{-\beta} \qquad \text{for all $x \in \Ombar$ and $t \in [0, T)$},
  \end{align} 
  provided $v \in C^0(\Ombar \times [0, T)) \cap C^{2, 1}(\Ombar \times (0, T))$ is a nonnegative classical solution of
  \begin{align}\label{prob:v_p}
    \begin{cases}
      \tau v_t = \Delta v - v + g & \text{in $\Omega \times (0, T)$}, \\
      \partial_\nu v = 0          & \text{on $\partial \Omega \times (0, T)$}, \\
      v(\cdot, 0) = v_0           & \text{in $\Omega$}.
    \end{cases}
  \end{align}
\end{theorem}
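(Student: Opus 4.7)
The plan is to follow the strategy already sketched in the introduction, namely to introduce a radially symmetric weighted quantity whose gradient can be controlled uniformly in $L^\infty$, and to split the argument into the regime $\qu \in (1, \frac{n}{2}]$ (where $\beta \gt 1$) and the more delicate regime $\qu \in (\frac{n}{2}, n]$ (where $\beta$ may lie in $(0, 1]$). After rescaling time to normalize $\tau = 1$, I would first collect the auxiliary a priori bounds provided by Section~\ref{sec:semigroup} and its companion lemmas: a uniform $\sob{1}{p_1}$ bound on $v$ for some $p_1 \gt 1$ via Lemma~\ref{lm:v_sob1p}, a pointwise upper bound for $v$ from Lemma~\ref{lm:v_upper_est}, and the spatial and temporal Hölder bounds of Lemma~\ref{lm:v_hoelder} and Lemma~\ref{lm:v_time_hoelder}.

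Fix a radial cut-off $\zeta \in C^2(\Ombar)$ with $c_1 |x| \le \zeta(x) \le c_2 |x|$, $\zeta(x) = |x|$ near the origin, and $\partial_\nu \zeta^\beta = 0$ on $\partial\Omega$. For $\qu \in (1, \frac{n}{2}]$ I set $z \defs \zeta^\beta v$. Since $\beta \gt 1$ and $v$ is pointwise bounded, $z$ is of class $C^{2,1}$, vanishes at $x = 0$, satisfies homogeneous Neumann conditions, and solves a heat equation driven by $\zeta^\beta g$ plus the commutator $2\nabla \zeta^\beta \cdot \nabla v + v \Delta \zeta^\beta$. Writing the variation-of-constants formula for $\nabla z$ relative to the Neumann semigroup $(e^{t(\Delta - 1)})_{t \ge 0}$ and controlling each term via the $L^\infty \leftarrow L^\qu$ type gradient smoothing estimates of Section~\ref{sec:semigroup} together with the $\sob{1}{p_1}$ and pointwise bounds on $v$ above, I expect $\nabla z \in L^\infty(\Omega \times (0, T))$. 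The inequality $|x|^\beta |\nabla v| \le |\nabla z| + \beta \zeta^{\beta - 1} |\nabla \zeta|\, v$ then gives~\eqref{eq:main_p:statement}. The assumption $\||x|^\beta \nabla v_0\|_{\leb\infty} \le M$ in~\eqref{eq:cond_v0_p} enters precisely to bound the initial-datum contribution $\nabla e^{t(\Delta - 1)} z_0$ uniformly on $(0, T)$.

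For $\qu \in (\frac{n}{2}, n]$, the quantity $\zeta^\beta v$ need not vanish at the origin and the previous scheme breaks down. I would instead set $z(x, t) \defs \zeta^\beta(x)(v(x, t) - v(0, t))$ as in~\eqref{eq:intro:z}: the spatial Hölder bound makes $z$ of class $C^1$ at the origin with $z_r(0, \cdot) \equiv 0$, so the semigroup scheme of the previous paragraph can be repeated, except that the equation for $z$ picks up the additional, spatially non-trivial yet only time-dependent inhomogeneity $-\zeta^\beta v_t(0, t)$. The main obstacle is precisely to absorb this term: using the time Hölder estimate of Lemma~\ref{lm:v_time_hoelder} to quantify the modulus of continuity of $t \mapsto v(0, t)$ and splitting the associated Duhamel integral into short- and long-time parts on which the smoothing of the Neumann semigroup can be exploited, I expect a uniform bound on $\nabla z$ in $L^\infty(\Omega \times (0, T))$, yielding~\eqref{eq:main_p:statement} in this regime as well.
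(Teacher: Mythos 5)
Your proposal follows essentially the same route as the paper's proof (Lemmas~\ref{lm:v_sob1p}--\ref{lm:nabla_v_pw}): rescaling time to normalize $\tau=1$, the auxiliary $W^{1,p}$, pointwise, and Hölder bounds, the weighted quantities $\zeta^\beta v$ resp.\ $\zeta^\beta(v-v(0,t))$ in the two regimes, and semigroup smoothing applied via Duhamel to the problem solved by $z$. The one point to make precise is that the term $\zeta^\beta v_t(0,\cdot)$ is absorbed not by splitting the Duhamel integral into short- and long-time parts (which alone cannot work, since $v_t(0,s)$ is not pointwise controlled) but by an integration by parts in $s$, trading $\partial_s v(0,s)$ for the Hölder-controlled difference $v(0,s)-v(0,t)$ at the cost of one extra power of $A$ on the semigroup --- precisely the combination that Lemma~\ref{lm:v_time_hoelder} and Lemma~\ref{lm:semigroup_est}~(ii) are set up to handle.
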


\begin{remark}
  \begin{enumerate}
    \item[(i)]
      In \cite[Lemma~3.4]{WinklerBlowupProfilesLife}, corresponding estimates have been derived for $\tau = 1$ and $\qu = 1$
      (provided that in addition to $\eqref{eq:cond_g_p}$, certain pointwise upper estimates of $|g|$ are known).
      This is the reason why we concern ourselves only with $\qu \gt 1$ in Theorem~\ref{th:main_p}.

    \item[(ii)]
      The constant $C$ in Theorem~\ref{th:main_p} evidently needs at least to depend on $\||x|^{\beta} \nabla v_0\|_{\leb\infty}$
      and we avoid further dependencies on the initial data as much as possible;
      in particular, we do neither rely on a $\sob2\qu$ bound nor on fulfillment of certain boundary conditions.
      For technical reasons, however, we need to require \eqref{eq:cond_v0_p},
      which is nearly optimal in the sense that a bound of $\||x|^{\beta} \nabla v_0\|_{\leb\infty}$
      implies bounds for $\|\nabla v_0\|_{\leb p}$ for all $p \in [1, \frac{n}{\beta})$.
  \end{enumerate}
\end{remark}

Next, we apply Proposition~\ref{prop:main_e} and Theorem~\ref{th:main_p}
to the solutions (or, more precisely, to their second components) of the quasilinear chemotaxis system
\begin{align}\label{prob:ks}
  \begin{cases}
    u_t = \nabla \cdot (D(u, v) \nabla u - S(u, v) \nabla v),              & \text{in $\Omega \times (0, T)$}, \\
    \tau v_t = \Delta v - v + f(u, v),                                     & \text{in $\Omega \times (0, T)$}, \\
    (D(u, v) \nabla u - S(u, v) \nabla v) \cdot \nu = \partial_\nu v  = 0, & \text{on $\partial \Omega \times (0, T)$}, \\
    u(\cdot, 0) = u_0                                                      & \text{in $\Omega$}, \\
    v(\cdot, 0) = v_0 \text{ if $\tau \gt 0$},                             & \text{in $\Omega$},
  \end{cases}
\end{align}
where again $\Omega$ is an $n$-dimensional ball, $\tau \ge 0$, $T \in (0, \infty]$ and $u_0, v_0, D, S, f$ are given functions.
Such systems aim to describe chemotaxis, the partially directed movement of organisms $u$ towards a chemical stimulus $v$
and have (for certain choices of parameters) first been proposed by Keller and Segel \cite{KellerSegelTravelingBandsChemotactic1971}.
In certain biological settings, the functions $D$ and $S$ need to be nonlinear---%
accounting for volume-filling effects~%
\cite{HillenPainterUserGuidePDE2009, PainterHillenVolumefillingQuorumsensingModels2002, WrzosekVolumeFillingEffect2010},
immotility of the attracted organisms~\cite{FuEtAlStripeFormationBacterial2012, LeyvaEtAlEffectsNutrientChemotaxis2013}
or saturation of the chemotactic sensitivity~\cite{KalininEtAlLogarithmicSensingEscherichia2009}, for instance.
For a broader overview on chemotaxis systems, we refer to the survey \cite{BellomoEtAlMathematicalTheoryKeller2015}.
 
Before stating our new findings, let us briefly discuss some known results.
For the sake of exposition,
we confine ourselves mainly with the prototypical choices $D(u, v) = (u + 1)^{m-1}$, $S(u, v) = u (u + 1)^{q-1}$ and $f(u, v) = u$,
where $m, q \in \R$ are given parameters,

For the question whether solutions to \eqref{prob:ks} always exist globally,
the value $\frac{n-2}{n}$, $n$ denoting the space dimension, distinguishes between boundedness and blow-up in either finite or infinite time:
If $m-q \gt \frac{n-2}{n}$, solutions to \eqref{prob:ks} remain bounded and hence exist globally
while for $m-q \lt \frac{n-2}{n}$, in multi-dimensional balls, there are initial data leading to unbounded solutions 
(cf.\ \cite{LankeitInfiniteTimeBlowup2020, WinklerDjieBoundednessFinitetimeCollapse2010} for the parabolic--elliptic and
\cite{HorstmannWinklerBoundednessVsBlowup2005, IshidaEtAlBoundednessQuasilinearKeller2014, TaoWinklerBoundednessQuasilinearParabolic2012}
for the parabolic--parabolic case as well as for instance
\cite{CalvezCarrilloVolumeEffectsKeller2006, CieslakWinklerFinitetimeBlowupQuasilinear2008, KowalczykSzymanskaGlobalExistenceSolutions2008, SenbaSuzukiQuasilinearParabolicSystem2006} for earlier partial and related results in this direction).
Similar results are also available for functions $D$ and $S$ decaying exponentially fast in $u$
(see \cite{CieslakWinklerGlobalBoundedSolutions2017} for boundedness in 2D,
\cite{WinklerDoesVolumefillingEffect2009} for the existence of unbounded solutions
and \cite{WinklerGlobalExistenceSlow2017} for the possibility of infinite-time blow-up,
for instance).

In the parabolic--elliptic setting, the sign of $q$ determines whether finite-time blow-up is possible.
That is, while for $q \le 0$ and arbitrary $m \in \R$, solutions to \eqref{prob:ks} are always global in time
and hence unbounded ones have to blow up in infinite time~\cite{LankeitInfiniteTimeBlowup2020},
finite-time blow-up has been detected in the radially symmetric setting for ($m-q \lt \frac{n-2}{n}$ and) $q \gt 0$ in a slightly simplified system~%
\cite{WinklerDjieBoundednessFinitetimeCollapse2010}.
For the fully parabolic case, the situation is similar but not yet as conclusive.
Again, solutions are always global in time for $q \le 0$~\cite{WinklerGlobalClassicalSolvability2019}
but, to the best of our knowledge, finite-time blow-up is only known to occur in multi-dimensional balls if $m-q \lt \frac{n-2}{n}$ and
either $m \ge 1$ (and hence $q \gt \frac2n \gt 0$) or $m \in \R$ and $q \ge 1$~%
\cite{CieslakStinnerFinitetimeBlowupGlobalintime2012, CieslakStinnerFiniteTimeBlowupSupercritical2014, CieslakStinnerNewCriticalExponents2015}.
(For the one-dimensional case, see \cite{CieslakLaurencotFiniteTimeBlowup2010}.)
However, it has been conjectured (for instance in \cite{WinklerGlobalClassicalSolvability2019}) that solutions blowing up in finite time
also exist for the remaining cases ($m-q \lt \frac{n-2}{n}$ and) $m \lt 1$ or $q \in (0, 1)$.

Regarding the behavior of solutions blowing up in finite time near their blow-up time,
some partial results are available for the special case $m=q=1$.
The probably most striking result in this direction is the occurrence of chemotactic collapse; that is,
solutions in two-dimensional balls may converge to a Dirac-type distribution,
both in the parabolic--elliptic~\cite{SenbaSuzukiChemotacticCollapseParabolicelliptic2001}
and in the parabolic--parabolic~\cite{HerreroVelazquezBlowupMechanismChemotaxis1997, NagaiEtAlChemotacticCollapseParabolic2000}
setting.

Moreover, in the radially symmetric multi-dimensional setting,
there are solutions $(u, v)$ blowing up in finite-time which converge pointwise (in $\Omega \setminus \{0\}$) to so called blow-up profiles $(U, V)$,
which for every 
\begin{align*}
  \begin{cases}
    \alpha \ge 2,       & \tau = 0, \\
    \alpha \gt n(n-1),  & \tau = 1
  \end{cases}
  \quad \text{and} \quad
  \beta \gt n-1
\end{align*}
fulfill
\begin{align*}
  U(x) \le C |x|^{-\alpha}
  \quad \text{and} \quad
  V(x) \le C |x|^{-\beta}
  \qquad \text{for all $x \in \Omega$}
\end{align*}
for some $C \gt 0$
(see \cite{SoupletWinklerBlowupProfilesParabolicelliptic2018} for the parabolic--elliptic
and \cite{WinklerBlowupProfilesLife} for the parabolic--parabolic case).

Recently, these results have been extended to quasilinear Keller--Segel systems~\cite{FuestBlowupProfilesQuasilinear2020}:
Again in $n$-dimensional balls, $n \ge 2$, but for arbitrary $m \gt \frac{n-2}{n}$, $m-q \in (-\frac1n, \frac{n-2}{n}]$,
$\alpha \gt \frac{n(n-1)}{(m-q)n + 1}$ and $\beta \gt n-1$,
solutions $(u, v)$ of \eqref{prob:ks} blowing up at $\tmax \in (0, \infty)$ fulfill
\begin{align*}
  u(x, t) \le C |x|^{-\alpha}
  \quad \text{and} \quad
  v(x, t) \le C |x|^{-\beta}
  \qquad \text{for all $x \in \Omega$ and $t \in (0, \tmax)$}
\end{align*}
for some $C \gt 0$.
Apart from certain corner cases, however,
it is to the best of our knowledge not known whether the exponents $\alpha$ and $\beta$ therein are (essentially) optimal.

In the present article, we apply Proposition~\ref{prop:main_e} and Theorem~\ref{th:main_p} in order to improve on these estimates---%
provided that the first solution component is uniformly-in-time bounded in $\leb\pu$ for some $\pu \gt 1$.

\begin{theorem}\label{th:pw_ks}
  Let $n \ge 2$, $R \gt 0$, $\Omega \defs B_R(0) \subset \R^n$ and
  \begin{align*}
    m, q \in \R,
    s \gt 0,
    \tau \ge 0,
    K_{D,1}, K_{D,2}, K_S, K_f \gt 0, M \gt 0, \pu \in [\max\{s, 1\}, ns]
  \end{align*}
  be such that
  \begin{align}\label{eq:pw_ks:cond_m_q}
    m-q \in \left(-\frac{\pu}{n}, \frac{ns-2\pu}{n} \right]
    \quad \text{and} \quad
    m \gt \frac{n-2\pu}{n}.
  \end{align}
  For any
  \begin{align}\label{eq:pw_ks:cond_alpha}
    \alpha \gt \ul \alpha \defs \frac{n (ns-\pu)}{[(m-q)n + \pu] \pu}
    \quad \text{and} \quad
    \beta \gt \frac{ns-\pu}{\pu},
  \end{align}
  we can find $C \gt 0$ such that whenever
  $(u, v) \in \left( C^0(\Ombar \times [0, T)) \cap C^{2, 1}(\Ombar \times (0, T))\right)^2$, $T \in (0, \infty]$,
  with
  \begin{align}\label{eq:pw_ks:cond_M}
    \sup_{t \in (0, T)} \|u(\cdot, t)\|_{\leb{\pu}} \le M
  \end{align}
  is a nonnegative, radially symmetric solution of \eqref{prob:ks}, where
  \begin{align*}
    D, S \in C^1([0, \infty)^2), \quad
    f \in C^0([0, \infty)^2), \quad
    0 \le u_0 \in \con0
    \quad \text{and} \quad
    0 \le v_0 \in \con0
  \end{align*}
  fulfill
  \begin{align*} 
      &\inf_{\sigma \ge 0} D(\rho, \sigma) \ge K_{D,1} \rho^{m-1}, \\
      &\sup_{\sigma \ge 0} D(\rho, \sigma) \le K_{D,2} \max\{\rho, 1\}^{m-1} \\
      &\sup_{\sigma \ge 0} |S(\rho, \sigma)| \le K_S \max\{\rho, 1\}^q \quad \text{and} \\
      &\sup_{\sigma \ge 0} |f(\rho, \sigma)| \le K_f \max\{\rho, 1\}^s
  \end{align*}
  for all $\rho \ge 0$ as well as
  \begin{align*}
    u_0(x) \le M |x|^{-\alpha} \text{ for all $x \in \Omega$}
    \quad \text{and} \quad
    \|v_0\|_{\sob1\infty} \le M,
  \end{align*}
  then
  \begin{align}\label{eq:pw_ks:statement}
    u(x, t) \le C |x|^{-\alpha}
    \quad \text{and} \quad
    |\nabla v(x, t)| \le C |x|^{-\beta}
    \qquad \text{for all $x \in \Omega$ and $t \in (0, T)$}.
  \end{align}
\end{theorem}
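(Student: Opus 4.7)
The strategy splits \eqref{eq:pw_ks:statement} into the gradient estimate on $v$, which follows almost directly from the preceding sections, and the pointwise estimate on $u$, which is obtained via a comparison argument in radial mass coordinates.

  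For the first part, I set $\qu \defs \pu / s$. Thanks to $\pu \in [\max\{s,1\}, ns]$ this exponent satisfies $\qu \in [1,n]$, and the lower bound in \eqref{eq:pw_ks:cond_alpha} reads $\beta \gt \frac{ns-\pu}{\pu} = \frac{n-\qu}{\qu}$. The growth assumption on $f$ together with \eqref{eq:pw_ks:cond_M} yields the uniform-in-time bound
  \begin{align*}
    \sup_{t \in (0,T)} \|f(u(\cdot,t), v(\cdot,t))\|_{\leb{\qu}} \le K_f \bigl( M^s + |\Omega|^{1/\qu} \bigr),
  \end{align*}
  so an application of Proposition~\ref{prop:main_e} (if $\tau = 0$) or Theorem~\ref{th:main_p} (if $\tau \gt 0$), with $g \defs f(u,v)$, immediately produces the desired estimate $|\nabla v(x,t)| \le C |x|^{-\beta}$.

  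For the estimate on $u$, I switch to radial mass coordinates. Writing $r = |x|$, setting $s = r^n/n$ and defining
  \begin{align*}
    w(s,t) \defs \int_0^{(ns)^{1/n}} \rho^{n-1} u(\rho,t) \drho, \qquad (s,t) \in [0, R^n/n] \times [0,T),
  \end{align*}
  gives $w_s = u$ and turns the first equation in \eqref{prob:ks} into a degenerate one-dimensional parabolic problem. Combining the growth bounds on $D$ and $S$ with the just-established estimate on $|\nabla v|$, I expect a differential inequality of the shape
  \begin{align*}
    w_t \le n^2 K_{D,2}\, s^{2 - 2/n} \bigl( \max\{w_s, 1\}^{m-1} w_s \bigr)_s + C K_S\, s^{1 - (1+\beta)/n} \max\{w_s, 1\}^q,
  \end{align*}
  supplemented by $w(0,t) = 0$, the a priori bound $w(R^n/n, t) \le M |\Omega|/n$ and the initial estimate $w(s,0) \le C' s^{1 - \alpha/n}$ inherited from $u_0(x) \le M |x|^{-\alpha}$. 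I would then test the ansatz $\ol w(s) \defs A s^{1 - \alpha/n}$ as a supersolution: the exponents of $s$ produced by the diffusion and by the drift balance precisely when $\alpha = \frac{n \beta}{(m-q)n + \pu}$ at $\beta = \frac{ns-\pu}{\pu}$, so the strict inequality $\alpha \gt \ul \alpha$ from \eqref{eq:pw_ks:cond_alpha} leaves just enough room to absorb the drift for all sufficiently large $A$. The technical assumption $m \gt \frac{n - 2\pu}{n}$ enters exactly here, ensuring $1 - \alpha/n \gt 0$ and that the diffusion power of $s$ dominates the drift power near the origin. Once $w \le \ol w$ is established, differentiating in $s$ yields the claimed pointwise bound on $u$.

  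The main obstacle is making the comparison step rigorous: the drift coefficient depends nonlinearly on $w_s$, the diffusion degenerates both at $s = 0$ and where $w_s$ vanishes, and the supersolution is singular at $s = 0$. I plan to handle this along the lines of \cite{FuestBlowupProfilesQuasilinear2020}, by first working on $[\eps, R^n/n]$ with a regularised supersolution $A(s+\eps)^{1-\alpha/n} + \delta t$, analysing the sign of $w - \ol w$ at a putative first crossing time via the differential inequality, and finally letting $\delta, \eps \searrow 0$ using the continuity of $w$ at $s = 0$ and the initial pointwise bound.
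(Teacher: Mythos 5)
Your proposal is correct and follows essentially the same route as the paper: the gradient bound on $v$ is obtained by applying Proposition~\ref{prop:main_e} resp.\ Theorem~\ref{th:main_p} with $\qu = \pu/s$ and $g = f(u,v)$, and the pointwise bound on $u$ then comes from the mass-coordinate comparison argument. The only difference is that the paper invokes that comparison step as a black box via \cite[Theorem~1.1]{FuestBlowupProfilesQuasilinear2020} (after matching its hypotheses by a suitable choice of auxiliary exponents), whereas you sketch re-deriving it.
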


As a first application of Theorem~\ref{th:pw_ks}, let us state
\begin{remark}
  To the best of our knowledge, the results above give the first estimates of type~\eqref{eq:pw_ks:statement}
  for chemotaxis systems with nonlinear signal production.
  For instance, letting $u_0 \in \con0$, $v_0 \in \sob1\infty$, $m = q = 1$, $\tau \ge 0$, $\pu = 1$, $s \in (\frac2n, 1]$
  and $\eps \gt 0$,
  solutions of
  \begin{align*}
    \begin{cases}
      u_t = \Delta u - \nabla \cdot (u \nabla v),  & \text{in $\Omega \times (0, T)$}, \\
      \tau v_t = \Delta v - v + u^s,               & \text{in $\Omega \times (0, T)$}, \\
      \partial_\nu u = \partial_\nu v = 0,         & \text{on $\partial \Omega \times (0, T)$}, \\
      u(\cdot, 0) = u_0,                           & \text{in $\Omega$}, \\
      v(\cdot, 0) = v_0 \text{ if $\tau \gt 0$},   & \text{in $\Omega$}
    \end{cases}
  \end{align*}
  fulfill
  \begin{align*}
    u(x, t) \le C |x|^{-n (ns - 1) - \eps}
    \qquad \text{for all $x \in \Omega$ and $t \in (0, T)$}
  \end{align*}
  for some $C \gt 0$.
\end{remark}

Next, we show that Theorem~\ref{th:pw_ks} implies a certain (essentially) conditional optimality for pointwise upper estimates
of solutions to \eqref{prob:ks}.
\begin{remark}
  Suppose $s = 1$ and
  \begin{align}
    m - q \in \left(-1, \frac{n-2}{n} \right]
    \quad \text{as well as} \quad
    q \gt 0
  \end{align}
  and that \eqref{eq:pw_ks:cond_M} holds for
  \begin{align}
    \pu = \frac n2(1-(m-q)) \in [1, ns).
  \end{align}
  Then
  \begin{align*}
    m - q = \frac{n - 2\pu}{n} \in \left(-\frac{\pu}{n}, \frac{n - 2\pu}{n} \right],
  \end{align*}
  hence \eqref{eq:pw_ks:cond_m_q} is fulfilled.
  This implies that for $\ul \alpha$ \eqref{eq:pw_ks:cond_alpha}, we have
  \begin{align*}
      \ul \alpha
    = \frac{n}{\pu} \cdot \frac{n-\pu}{(m-q)n + \pu}
    = \frac{n}{\pu} \cdot \frac{\frac n2 + \frac{(m-q)n}{2}}{\frac n2 + \frac{(m-q)n}{2}}
    = \frac{n}{\pu}
    = \frac{2}{1-(m-q)}
  \end{align*}
  so that \cite[Corollary~2.3]{FreitagBlowupProfilesRefined2018} asserts that condition \eqref{eq:pw_ks:cond_alpha} is (up to equality) optimal.
  Furthermore,
  we note that requiring \eqref{eq:pw_ks:cond_M} for any $\pu \gt \frac n2(1-(m-q))$
  already implies global existence (cf.\ \cite[Theorem~2.2]{FreitagBlowupProfilesRefined2018}),
  while, to the best of our knowledge, even a solution blowing up in finite time might fulfill \eqref{eq:pw_ks:cond_M} for $\pu = \frac n2(1-(m-q))$.

  To sum up,
  \begin{align*}
    \text{optimal $L^p$ bounds imply essentially optimal pointwise upper estimates.}
  \end{align*}
\end{remark}

\paragraph{Notation}
Henceforth, we fix $n \ge 2$, $R \gt 0$ and $\Omega \defs B_R(0)$.
Moreover, with the usual slight abuse of notation, we switch to radial coordinates whenever convenient
and thus write for instance $v(|x|)$ for $v(x)$.

\section{Pointwise estimates for \tops{$\nabla v$}{grad v}. The elliptic case}\label{sec:elliptic}
We first deal with the much simpler elliptic case; that is, we set $\tau \defs 0$ in this section.
As a starting point, we obtain an $L^\qu$ bound for $\Delta v$ by a straightforward testing procedure.
For the parabolic case, which we will deal with in Section~\ref{sec:parabolic},
one cannot expect a similar result to hold if one only wants to assume that the initial datum satisfies \eqref{eq:cond_v0_p}
and not, say, $v_0 \in \sob22$ with $\partial_\nu v_0 = 0$ in the sense of traces and $\|v_0\|_{\sob22} \le M$.
\begin{lemma}\label{lm:ell_v_sob2p}
  Let $M \gt 0$ and $\qu \in [1, \infty)$.
  If $g$ is as in \eqref{eq:cond_g_e}
  and $v \in \con2$ is a classical solution of \eqref{prob:v_e},
  then
  \begin{align*}
    \|\Delta v\|_{\leb \qu} \le 2 M.
  \end{align*}
\end{lemma}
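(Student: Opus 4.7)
The plan is to rewrite the equation as $\Delta v = v - g$ and estimate $\|v\|_{\leb\qu}$ and $\|g\|_{\leb\qu}$ separately. Since $\|g\|_{\leb\qu} \le M$ is given, it suffices to show the maximum principle--type inequality $\|v\|_{\leb\qu} \le \|g\|_{\leb\qu}$, from which
\begin{align*}
  \|\Delta v\|_{\leb\qu} \le \|v\|_{\leb\qu} + \|g\|_{\leb\qu} \le 2M
\end{align*}
follows immediately.

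To obtain $\|v\|_{\leb\qu} \le \|g\|_{\leb\qu}$, I would multiply the elliptic equation $0 = \Delta v - v + g$ by $|v|^{\qu-2} v$ and integrate over $\Omega$. Integration by parts on the Laplacian term, combined with the Neumann condition $\partial_\nu v = 0$, eliminates the boundary integral and produces
\begin{align*}
  \intom |v|^{\qu-2} v \, \Delta v = -(\qu-1) \intom |v|^{\qu-2} |\nabla v|^2 \le 0.
\end{align*}
Discarding this nonpositive term and applying Hölder's inequality to the remaining source contribution yields
\begin{align*}
  \intom |v|^{\qu} \le \intom |v|^{\qu-1} |g| \le \|v\|_{\leb\qu}^{\qu-1} \|g\|_{\leb\qu},
\end{align*}
and dividing by $\|v\|_{\leb\qu}^{\qu-1}$ (or noting the estimate is trivial when $v \equiv 0$) gives the claim.

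The main technical obstacle is that $|v|^{\qu-2} v$ is not smooth enough to be an admissible test function for $\qu \in [1,2)$, and in particular for the endpoint $\qu = 1$ the ``test function'' degenerates to $\sign v$. In that range I would carry out the above computation with the regularized test function $(v^2 + \eps)^{(\qu-2)/2} v$, which lies in $\con1$; after integration by parts the diffusion term is still nonpositive, and the Hölder estimate for the source term is uniform in $\eps$. Taking $\eps \searrow 0$ by dominated convergence (using that $v$ is bounded as an element of $\con2$) recovers the desired inequality in full generality.
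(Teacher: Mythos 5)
Your argument is correct, and its core step is the same as the paper's: both derive $\|v\|_{\leb\qu}\le\|g\|_{\leb\qu}$ by testing the equation with (a power of) $v$, discarding the nonpositive diffusion term and controlling the source term --- whether by H\"older's or by Young's inequality is immaterial. You diverge in how the bound on $v$ is converted into a bound on $\Delta v$: the paper performs a second testing procedure, with $-\Delta v\,|\Delta v|^{\qu-2}$, and absorbs $\intom|\Delta v|^\qu$ via Young's inequality, whereas you simply read off $\Delta v=v-g$ pointwise and apply the triangle inequality in $\leb\qu$. Your route is shorter, needs no discussion of the admissibility of the second test function, and yields the same constant $2M$. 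You are also more careful than the printed proof on a technical point: the paper tests with $v^{\qu-1}$, which is only meaningful for nonnegative $v$ (not assumed in the statement of Proposition~\ref{prop:main_e}) and fails to be $C^1$ for $\qu\in(1,2)$; your regularization $(v^2+\eps)^{(\qu-2)/2}v$ fixes both issues, since the diffusion term then carries the factor $(\qu-1)v^2+\eps\ge0$, the comparison $(v^2+\eps)^{(\qu-2)/2}\le|v|^{\qu-2}$ makes the source estimate uniform in $\eps$ in the range $\qu\in[1,2)$, and the limit $\eps\searrow0$ is justified by monotone/dominated convergence because $v$ and $g$ are continuous on the closure of $\Omega$.
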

\begin{proof}
  Testing \eqref{prob:v_e} with $v^{\qu-1}$ and making use of Young's inequality gives
  \begin{align*}
        \intom v^\qu
    =   \intom v^{\qu-1} \Delta v
        + \intom v^{\qu-1} g
    \le -(\qu - 1) \intom v^{\qu-2} |\nabla v|^2
        + \frac{\qu-1}{\qu} \intom v^\qu + \frac1\qu \intom g^\qu
  \end{align*}
  and hence
  \begin{align*}
        \intom v^\qu
    \le \intom g^\qu
    \le M^\qu.
  \end{align*}

  For $\qu = 1$, this already implies
  \begin{align*}
    \intom |\Delta v| \le \intom (|v| + |g|) \le 2M,
  \end{align*}
  while for $\qu \gt 1$,
  we further test \eqref{prob:v_e} with $-\Delta v |\Delta v|^{\qu-2}$ and use Young's inequality to obtain
  \begin{align*}
        \intom |\Delta v|^\qu
    \le \intom (|v| + |g|) |\Delta v|^{\qu-1}
    \le \frac{\qu-1}{\qu} \intom |\Delta v|^\qu
        + \frac{2^{\qu-1}}{\qu} \intom |v|^\qu
        + \frac{2^{\qu-1}}{\qu} \intom |g|^\qu,
  \end{align*}
  which also implies
  \begin{equation*}
        \intom |\Delta v|^\qu
    \le 2^{\qu-1} \intom |v|^\qu
        + 2^{\qu-1} \intom |g|^\qu
    \le 2^\qu M^\qu,
  \end{equation*}
  as desired.
\end{proof}

Making crucial use of the radial symmetry,
we now show that the bound obtained in Lemma~\ref{lm:ell_v_sob2p} implies the desired estimate \eqref{eq:main_e:statement}.
\begin{lemma}\label{lm:ell_vr_est}
  Let $M \gt 0$, $\qu \in [1, n)$ and $\beta \ge \frac{n-\qu}{\qu}$.
  There is $C \gt 0$ such that if $g$ satisfies \eqref{eq:cond_g_e} and $v \in \con2$ is as a classical solution of \eqref{prob:v_e},
  then \eqref{eq:main_e:statement} holds.
\end{lemma}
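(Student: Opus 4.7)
The plan is to exploit the radial ODE form of the Laplacian together with the $L^{\qu}$ bound on $\Delta v$ provided by Lemma~\ref{lm:ell_v_sob2p}. Since $v \in C^2(\Ombar)$ is radial, its radial derivative $v_r$ extends continuously to $[0,R]$ with $v_r(0) = 0$ (forced by $C^1$-regularity at the origin), and the identity
\begin{align*}
  \Delta v(r) = \frac{1}{r^{n-1}} \big(r^{n-1} v_r(r)\big)_r
\end{align*}
holds on $(0,R)$. Integrating this from $0$ to $r$ and using $v_r(0) = 0$ gives the representation
\begin{align*}
  r^{n-1} v_r(r) = \int_0^r \rho^{n-1} \Delta v(\rho) \dr.
\end{align*}

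Next, I would apply Hölder's inequality (trivially if $\qu = 1$) with exponents $\qu$ and $\qu' = \qu/(\qu-1)$ to separate the factor $\rho^{n-1}$ into a weight compatible with the radial volume element, obtaining
\begin{align*}
  \int_0^r \rho^{n-1} |\Delta v(\rho)| \dr
  \le \left( \int_0^r \rho^{n-1} \dr \right)^{1 - \frac1\qu}
      \left( \int_0^r \rho^{n-1} |\Delta v(\rho)|^{\qu} \dr \right)^{\frac1\qu}
  \le \lc{hoelder} r^{n - \frac n\qu},
\end{align*}
where the last step uses the bound $\omega_{n-1} \int_0^R \rho^{n-1} |\Delta v|^{\qu} \dr = \|\Delta v\|_{\leb\qu}^{\qu} \le (2M)^{\qu}$ from Lemma~\ref{lm:ell_v_sob2p}. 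Dividing by $r^{n-1}$ yields
\begin{align*}
  |v_r(r)| \le \lc{hoelder} r^{1 - \frac n\qu} = \lc{hoelder} r^{-\frac{n-\qu}{\qu}},
\end{align*}
which is exactly the claimed estimate for the borderline exponent $\beta_0 \defs \frac{n-\qu}{\qu}$.

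To pass to arbitrary $\beta \ge \beta_0$, I would simply note that on the bounded interval $r \in (0, R]$ one has $r^{-\beta_0} \le \max\{1, R\}^{\beta - \beta_0} \, r^{-\beta}$, so absorbing this constant into the final $C$ completes the proof. The step I expect to require slightly more care is the justification of $v_r(0) = 0$ and of the integration by parts (i.e., that $r \mapsto r^{n-1} v_r(r)$ is absolutely continuous on $[0, R]$ with the stated derivative), but both are immediate from $v \in \con2$ and the radial symmetry; no genuine obstacle arises.
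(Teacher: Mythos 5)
Your proposal is correct and follows essentially the same route as the paper: integrate the radial identity $r^{n-1}v_r(r)=\int_0^r \rho^{n-1}\Delta v(\rho)\,\mathrm{d}\rho$, apply Hölder's inequality against the weight $\rho^{n-1}$ together with the $L^{\mathbbm q}$ bound on $\Delta v$ from Lemma~\ref{lm:ell_v_sob2p}, and then pass from the borderline exponent $\frac{n-\mathbbm q}{\mathbbm q}$ to general $\beta$ by boundedness of $r$. The only cosmetic difference is that the paper phrases the integrand splitting via $(\rho^{n-1}v_r)_r$ directly rather than via $\rho^{n-1}\Delta v$, which is the same thing.
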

\begin{proof}
  By the fundamental theorem of calculus, Hölder's inequality and Lemma~\ref{lm:ell_v_sob2p},
  we may calculate
  \begin{align}\label{eq:ell_vr_est:vr}
          r^{n-1} |v_r(r)|
    &=    \left| \int_0^r \rho^\frac{n-1}{\qu} \rho^{1-n} (\rho^{n-1} v_r)_r \cdot \rho^{-(n-1)\frac{1-\qu}{\qu}} \drho \right| \notag \\
    &\le  \frac{\|\Delta v\|_{\leb\qu}}{\sqrt[\qu]{\omega_{n-1}}}
          \left( \int_0^r \rho^{n-1} \drho \right)^{\frac{\qu-1}{\qu}}
     \le  \frac{2M n^{-\frac{\qu-1}{\qu}}}{\sqrt[\qu]{\omega_{n-1}}} 
          \cdot r^{n - \frac n\qu}
    \qquad \text{for all $r \in (0, R)$.}
  \end{align}
  In view of $r^{n - \frac n\qu - (n - 1)} = r^{-\frac{n-\qu}{\qu}} \le R^{\beta-\frac{n-\qu}{\qu}} r^{-\beta}$ for $r \in (0, R)$,
  dividing by $r^{n-1}$ on both the left and the right hand side in \eqref{eq:ell_vr_est:vr}
  implies \eqref{eq:main_e:statement} for an appropriately chosen $C \gt 0$.
\end{proof}

\section{Intermission: semigroup estimates}\label{sec:semigroup}
\mainspace{G}%
The proof of a parabolic counterpart to the preceding section will in multiple places rely on certain semigroup estimates,
which we collect here for convenience.
As we will apply them in both $\Omega$ and $(0, R)$, we consider arbitrary smooth bounded domains $G \subset \R^N$, $N \in \N$, in this section.

\begin{lemma}\label{lm:frac_powers}
  Let $G \subset \R^N$, $N \in \N$, be a smooth bounded domain,
  and $p \in (1, \infty)$.
  Set
  \begin{align*}
    \sobn 2p
    \defs \left\{\,\varphi \in \sob2p : \partial_\nu \varphi = 0 \text { on $\partial\Omega$ in the sense of traces}\,\right\}
  \end{align*}
  and define the operator $A$ on $\leb p$ by
  \begin{align*}
            A \varphi
    &\defs  A_p \varphi
     \defs  -\Delta \varphi + \varphi
    \qquad \text{for $\varphi \in \mc D(A) \defs \sobn2p$}.
  \end{align*}
  Define moreover the fractional powers $A^\mu$, $\mu \in (0, 1)$,
  of the operator above as in \cite[Section~1.15]{TriebelInterpolationTheoryFunction1978}.
  Then there are $C_1, C_2 \gt 0$ such that
  \begin{alignat*}{2}
    \|\varphi\|_{\sob{2\mu}{p}} &\le C_1 \|A^\mu \varphi\|_{\leb p}
      &&\qquad \text{for all $\varphi \in \mc D(A^\mu)$ and all $\mu \in (0, 1)$}
  \intertext{and}
    \|A^\mu \varphi\|_{\leb p} &\le C_2 \|\varphi\|_{\sob{2\mu}{p}}
      &&\qquad \text{for all $\varphi \in \sob{2\mu}{p}$ and all $\mu \in \left(0, \frac{1 + \frac1p}{2}\right)$}.
  \end{alignat*}
\end{lemma}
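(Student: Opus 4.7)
The plan is to obtain both inequalities by identifying the fractional power domain with a complex interpolation space and then invoking the standard identification of Sobolev spaces as interpolation spaces. Concretely, following the construction referenced in \cite[Section~1.15]{TriebelInterpolationTheoryFunction1978}, the operator $A_p=-\Delta+1$ with Neumann boundary conditions is a positive, sectorial operator on $L^p(G)$ (this is classical; its bounded $H^\infty$-calculus and resolvent estimates are collected e.g.\ in Triebel Section 1.14-1.15). In particular one has
\begin{align*}
  \mc D(A^\mu) = [\leb p,\mc D(A)]_\mu = [\leb p, \sobn2p]_\mu
  \qquad\text{for every $\mu\in(0,1)$,}
\end{align*}
with equivalence of the graph norm of $A^\mu$ and the interpolation norm.

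Next I would plug in the standard identification of Sobolev spaces as complex interpolation spaces (e.g.\ \cite[Section~4.3.3]{TriebelInterpolationTheoryFunction1978}), namely $[\leb p,\sob2p]_\mu=\sob{2\mu}p$ for $\mu\in(0,1)$. For the \emph{first} inequality, since the continuous embedding $\sobn2p\embed\sob2p$ holds by definition, the interpolation functor is monotone and yields
\begin{align*}
  \mc D(A^\mu)=[\leb p,\sobn2p]_\mu \embed [\leb p,\sob2p]_\mu=\sob{2\mu}p,
\end{align*}
which gives $\|\varphi\|_{\sob{2\mu}p}\le C_1\|A^\mu\varphi\|_{\leb p}$ for all $\mu\in(0,1)$.

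For the \emph{second} inequality one needs the reverse embedding $\sob{2\mu}p\embed\mc D(A^\mu)$, and this is exactly where the boundary condition in $\sobn2p$ becomes the bottleneck. The Neumann trace $\partial_\nu$ is a bounded operator $\sob s p\to\leb{p}(\partial G)$ only for $s>1+\tfrac1p$; for $s<1+\tfrac1p$ the trace does not make sense on $\sob s p$, so there is no distinction between $\sob{s}p$ and its ``Neumann subspace'' and interpolation theory for spaces with boundary conditions (again \cite[Section~4.3.3]{TriebelInterpolationTheoryFunction1978}) provides
\begin{align*}
  [\leb p,\sobn2p]_\mu=\sob{2\mu}p
  \qquad\text{whenever $2\mu<1+\tfrac1p$,}
\end{align*}
i.e.\ exactly for $\mu\in(0,\tfrac{1+1/p}{2})$. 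Combined with the identification of $\mc D(A^\mu)$ above, this yields $\|A^\mu\varphi\|_{\leb p}\le C_2\|\varphi\|_{\sob{2\mu}p}$ in the claimed range.

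The only real subtlety is the asymmetry of the two estimates, which is dictated precisely by the trace threshold $2\mu=1+\tfrac1p$: for small $\mu$ Neumann data are invisible, so the interpolation spaces with and without boundary conditions agree and both inequalities reduce to one another; for $\mu$ above that threshold the subspace $\sobn2p$ is strictly smaller and only the embedding $\mc D(A^\mu)\embed\sob{2\mu}p$ survives. No computation beyond invoking the two cited identifications of Triebel is required; the main care is in quoting them with the correct range of parameters.
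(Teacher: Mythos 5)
Your proposal is correct and follows essentially the same route as the paper: both identify $\mc D(A^\mu)$ with the complex interpolation space $[\leb[G]{p}, \sobn[G]2p]_\mu$, invoke Triebel's interpolation identities for Sobolev spaces with and without boundary conditions (with the trace threshold $2\mu < 1 + \frac1p$ governing when equality rather than mere inclusion holds), and conclude from the fact that $A^\mu$ is an isomorphism onto $\leb[G]{p}$. The only cosmetic difference is that you obtain the inclusion $\mc D(A^\mu)\embed W^{2\mu,p}(G)$ via monotonicity of the interpolation functor, whereas the paper reads it off directly from the same theorem of Triebel.
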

\begin{proof}
  Let $\mu \in (0, 1)$.
  From \cite[Theorems~1.15.3 and 4.3.3]{TriebelInterpolationTheoryFunction1978},
  we infer $\mc D(A^\mu) = [\leb p, \sobn2p]_{\mu} \subset H_p^{2\mu}(G)$ with equality if $2\mu \lt 1 + \frac1p$.
  (Herein, $[\cdot, \cdot]_\mu$ and $H_p^{2\mu}(G)$
  are as in \cite[Convention~1.9.2]{TriebelInterpolationTheoryFunction1978} and \cite[Definition~4.2.1]{TriebelInterpolationTheoryFunction1978},
  respectively.)
  Since $\Omega$ is smooth,
  \cite[Theorem~4.6.1~(d)]{TriebelInterpolationTheoryFunction1978} moreover asserts that $H_p^{2\mu}(G)$ coincides with $\sob{\mu}{p}$.
  Thus, we obtain the desired estimates by noting that $A^\mu$ is an isomorphism between $D(A^\mu)$ and $\leb p$
  (cf.\ \cite[Theorem~1.15.2~(e)]{TriebelInterpolationTheoryFunction1978}). 
\end{proof}
  
\begin{lemma}\label{lm:semigroup_est}
  Let $G \subset \R^N$, $N \in \N$, be a smooth bounded domain.
  \begin{enumerate}
    \item[(i)]
      Suppose $\sigma \in \{0, 1\}$, $\mu \in \R$,
      $q \in (1, \infty)$, $p \in [q, \infty]$ and
      \begin{align*}
        s
        \begin{cases}
          \ge \frac Nq - \frac Np, & p \lt \infty, \\
          \gt \frac{N}{q},         & p = \infty
        \end{cases}
      \end{align*}
      are such that $\mu + \frac{\sigma + s}{2} \ge 0$.
      For any $\lambda \in [0, \mu + \frac{\sigma + s}{2}] \cap [0, \frac12 + \frac1{2q})$ and $\delta \in (0, 1)$,
      we can then find $C \gt 0$
      \begin{align*}
              \|\nabla^\sigma A^\mu \ure^{-t A} \varphi\|_{\leb p}
        &\le  C t^{\lambda - \mu - \frac{\sigma + s}{2}} \ure^{-\delta t} \|\varphi\|_{\sob{2\lambda}q}
        \qquad \text{for all $t \gt 0$ and $\varphi \in \sob{2\lambda}{q}$},
      \end{align*}
      where $A = A_q$ is as in Lemma~\ref{lm:frac_powers}.
      (Here and below, $\nabla^0 = \id$ and $\nabla^1 = \nabla$.)

    \item[(ii)]
      In particular,
      for any $\sigma \in \{0, 1\}$, $\mu \in \R$ with $\mu + \frac{\sigma}{2} \ge 0$,
      $\lambda \in [0, \mu + \frac{\sigma}{2}] \cap [0, \frac12)$, $\delta \in (0, 1)$ and $\eps \in (0, 2N)$,
      there is $C' \gt 0$ such that
      \begin{align*}
              \|\nabla^\sigma A^\mu \ure^{-t A} \varphi\|_{\leb \infty}
        &\le  C' t^{\lambda - \mu - \frac{\sigma}{2} - \eps} \ure^{-\delta t} \|\varphi\|_{\con{2\lambda}}
        \qquad \text{for all $t \gt 0$ and $\varphi \in \con{2\lambda}$},
      \end{align*}
      where $A = A_q$ for a certain $q \in (1, \infty)$ is again as in Lemma~\ref{lm:frac_powers}.
  \end{enumerate}
\end{lemma}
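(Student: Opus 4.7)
Both parts reduce to classical analytic-semigroup smoothing bounds for $(\ure^{-tA})_{t \ge 0}$, combined with the norm identification of Lemma~\ref{lm:frac_powers}. For~(i), the algebraic starting point is the splitting
\begin{align*}
  \nabla^\sigma A^\mu \ure^{-tA} \varphi
  = \bigl(\nabla^\sigma A^{\mu - \lambda} \ure^{-tA}\bigr) A^\lambda \varphi.
\end{align*}
Since $\lambda \lt \tfrac12 + \tfrac{1}{2q}$, Lemma~\ref{lm:frac_powers} gives $\|A^\lambda \varphi\|_{\leb q} \le C \|\varphi\|_{\sob{2\lambda}{q}}$, so it suffices to establish the operator-norm bound
\begin{align*}
  \|\nabla^\sigma A^{\mu - \lambda} \ure^{-tA}\|_{\leb q \to \leb p}
  \le C t^{\lambda - \mu - (\sigma + s)/2} \ure^{-\delta t}.
\end{align*}

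For the sharp exponent $s = s_0 \defs \tfrac{N}{q} - \tfrac{N}{p}$, I would split into the sign of $\mu - \lambda$. If $\mu \ge \lambda$, the factorisation $\ure^{-tA} = \ure^{-tA/2} \ure^{-tA/2}$ and the classical bounds $\|A^{\mu - \lambda} \ure^{-tA/2}\|_{\leb q \to \leb q} \le C t^{-(\mu - \lambda)} \ure^{-\delta t}$ together with $\|\nabla^\sigma \ure^{-tA/2}\|_{\leb q \to \leb p} \le C t^{-\sigma/2 - s_0/2} \ure^{-\delta t}$ compose into the desired estimate. If $\mu \lt \lambda$, I would commute $A^{\mu - \lambda} \ure^{-tA} = \ure^{-tA} A^{\mu - \lambda}$ and use Lemma~\ref{lm:frac_powers} to deduce $\|A^{\mu - \lambda} \psi\|_{\sob{2(\lambda - \mu)}{q}} \le C \|\psi\|_{\leb q}$; together with the Sobolev embedding $\sob{2(\lambda - \mu)}{q} \embed \leb{q'}$ for $\tfrac{1}{q'} = \tfrac{1}{q} - \tfrac{2(\lambda - \mu)}{N}$ (or its critical/supercritical analogue when $2(\lambda - \mu)q \ge N$), the standard smoothing $\|\nabla^\sigma \ure^{-tA}\|_{\leb{q'} \to \leb p} \le C t^{-\sigma/2 - N/(2q') + N/(2p)} \ure^{-\delta t}$ then closes the argument, since the resulting exponent collapses to $\lambda - \mu - (\sigma + s_0)/2$. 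For $s \gt s_0$, the claimed exponent in $t$ is strictly more negative; I would simply rewrite $t^{\lambda - \mu - (\sigma + s_0)/2} = t^{\lambda - \mu - (\sigma + s)/2} \cdot t^{(s - s_0)/2}$ and absorb the trailing factor into the exponential factor at the cost of an arbitrarily small reduction of $\delta$, which is harmless in view of the freedom $\delta \in (0, 1)$.

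Part~(ii) is a direct consequence of~(i). Given $\eps \in (0, 2N)$ and $\lambda \in [0, \tfrac12)$, I would fix $q \in (1, \infty)$ large enough that both $\lambda \lt \tfrac12 + \tfrac{1}{2q}$ and $\tfrac{N}{q} \lt 2\eps$ hold, and then apply~(i) with $p = \infty$ and $s \defs 2\eps$; the resulting exponent $\lambda - \mu - (\sigma + s)/2$ equals precisely $\lambda - \mu - \sigma/2 - \eps$, while the embedding $\con{2\lambda} \embed \sob{2\lambda}{q}$---valid since $G$ is bounded and smooth---converts $\|\varphi\|_{\sob{2\lambda}q}$ into $\|\varphi\|_{\con{2\lambda}}$.

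The main obstacle I foresee is the case $\mu \lt \lambda$ in~(i): tracking which of the Sobolev embeddings $\sob{2(\lambda - \mu)}{q} \embed \leb{q'}$ applies (subcritical, critical, or supercritical, depending on $2(\lambda - \mu) q$ vs.\ $N$) requires some bookkeeping, and if $\lambda - \mu$ falls outside the range $(0, \tfrac12 + \tfrac{1}{2q})$ in which Lemma~\ref{lm:frac_powers} is an equivalence, one needs to iterate the argument on fractional pieces each of magnitude strictly below the Lemma's threshold. This is conceptually routine but the Sobolev exponent arithmetic is where small errors can easily creep in.
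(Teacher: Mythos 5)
Your toolkit is the right one, and the $\mu \ge \lambda$ branch of part~(i) --- composing a fractional-power smoothing bound on $L^q(G)$ with a classical $L^q$--$L^p$ gradient smoothing bound, then converting norms via Lemma~\ref{lm:frac_powers} --- is essentially the same circle of ideas as the paper's proof. However, the way you organize the exponents fails on part of the parameter range the lemma covers. Your reduction of general $s$ to the sharp value $s_0 \defs \frac Nq - \frac Np$ presupposes that the $s_0$-estimate holds, which requires $\lambda \le \mu + \frac{\sigma + s_0}{2}$; the hypothesis only gives $\lambda \le \mu + \frac{\sigma + s}{2}$. When $\lambda \gt \mu + \frac{\sigma + s_0}{2}$ (possible: take $\sigma = \mu = 0$, $p = q$, $\lambda \gt 0$, $s \ge 2\lambda$), the intermediate estimate carries a \emph{positive} power of $t$ and is false, since $\nabla^\sigma A^\mu \ure^{-tA}\varphi$ does not tend to $0$ in $L^p(G)$ as $t \searrow 0$. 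The same sign problem resurfaces inside your $\mu \lt \lambda$ branch: when $2(\lambda - \mu) \gt s_0$ one gets $q' \gt p$, the exponent $-\frac{N}{2q'} + \frac{N}{2p}$ is positive, and the asserted $L^{q'}$--$L^p$ smoothing bound is again false (on a bounded domain the semigroup from a larger to a smaller Lebesgue exponent is merely bounded; it does not decay algebraically at $t = 0^+$). The paper avoids all of this by never extracting the sharp rate: it embeds $W^{\sigma+s,q}(G) \embed W^{\sigma,p}(G)$ with the full, possibly non-sharp, $s$, identifies $\|\cdot\|_{W^{\sigma+s,q}(G)}$ with $\|A^{(\sigma+s)/2}\cdot\|_{L^q(G)}$ via Lemma~\ref{lm:frac_powers} (iterating over intermediate Lebesgue exponents when $s \ge 1$, since the lemma requires $\sigma + s \lt 2$), and applies the estimate $\|A^\gamma \ure^{-tA}\|_{L^q(G) \to L^q(G)} \le C t^{-\gamma}\ure^{-\delta t}$ exactly once, with $\gamma = \mu + \frac{\sigma+s}{2} - \lambda \ge 0$ --- which is precisely where the hypothesis on $\lambda$ enters. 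You should restructure along these lines rather than patch the sign cases separately.

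A smaller but genuine issue in part~(ii): the embedding $C^{2\lambda}(\ol G) \embed W^{2\lambda,q}(G)$ you invoke fails at the borderline for non-integer $2\lambda$ (the Gagliardo seminorm of a function that is merely $2\lambda$-Hölder continuous diverges). The paper circumvents this by applying part~(i) with $\tilde\lambda \defs \lambda - \frac\eps2$ and $s \defs \eps$, so that only $C^{2\lambda}(\ol G) \embed W^{2\lambda - \eps, q}(G)$ is needed while the final time exponent is unchanged; since you have budgeted the loss $\eps$ anyway, the same shift repairs your version.
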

\begin{proof}
  Let us first prove part~(i) for $s \lt 1$.
  To that end, we begin by fixing some constants:
  By \cite[Theorem~4.6.1~(c) and (e)]{TriebelInterpolationTheoryFunction1978}, there is $\newlc1 \gt 0$ such that
  \begin{alignat*}{2}
          \|\psi\|_{\leb p}
    &\le  \lc1 \|\psi\|_{\sob sq}
    && \qquad \text{for all $\psi \in \sob sq$}.
  \intertext{Moreover, noting that $\sigma + s \lt 2$, $2\lambda \lt 1 + \frac1q$ and $q \in (1, \infty)$,
  Lemma~\ref{lm:frac_powers} asserts that we can find $\newlc2, \newlc3 \gt 0$ with}
          \|\psi\|_{\sob{\sigma + s}q}
    &\le  \lc2 \|A^\frac{\sigma + s}{2}\psi\|_{\leb q}
    && \qquad \text{for all $\psi \in \mc D(A^\frac{\sigma+s}{2})$}
  \intertext{as well as}
          \|A^\lambda \psi\|_{\leb q}
    &\le  \lc3 \|\psi\|_{\sob{2\lambda}q}
    && \qquad \text{for all $\psi \in \sob{2\lambda}{q}$}
  \intertext{and \cite[Theorem~1.4.3]{HenryGeometricTheorySemilinear1981} provides us with $\newlc4 \gt 0$ such that}
          \|A^\gamma \ure^{t A} \psi\|_{\leb q}
    &\le  \lc4 t^{-\gamma} \ure^{-\delta t} \|\psi\|_{\leb q}
    && \qquad \text{for all $\psi \in \leb q$},
  \end{alignat*}
  where $\gamma \defs -\lambda + \mu + \frac{\sigma + s}{2} \ge 0$ by the assumption on $\lambda$.

  Moreover noting that $A^\mu \ure^{-t A} \varphi = \ure^{-\frac t2 A} A^\mu \ure^{- \frac t2 A} \varphi \in \mc D(A^\frac{\sigma+s}{2}) \cap \sob sq$
  for all $\varphi \in \leb p$,
  we may therefore estimate
  \begin{align*}
          \|\nabla^\sigma A^\mu \ure^{-t A} \varphi\|_{\leb p}
    &\le  \lc1 \|\nabla^\sigma A^\mu \ure^{-t A} \varphi\|_{\sob sq} \\
    &\le  \lc1 \|A^\mu \ure^{-t A} \varphi\|_{\sob{\sigma + s}q} \\
    &\le  \lc1 \lc2 \|A^{\frac{\sigma + s}{2} + \mu} \ure^{-t A} \varphi\|_{\leb q} \\
    &=    \lc1 \lc2 \|A^{-\lambda + \mu + \frac{\sigma + s}{2}} \ure^{-t A} A^\lambda \varphi\|_{\leb q} \\
    &\le  \lc1 \lc2 \lc4 t^{-\gamma} \|A^\lambda \varphi\|_{\leb q} \\
    &\le  \lc1 \lc2 \lc3 \lc4 t^{-\gamma} \|\varphi\|_{\sob{2\lambda} q}
    \qquad \text{for all $t \gt 0$ and $\varphi \in \sob{2\lambda}{q}$},
  \end{align*}
  which proves part~(i) if $s \lt 1$.
  If $s \in [1, \infty)$ and $p \lt \infty$,
  we fix $k \in \N$ and $p = p_0 \ge p_1 \ge \dots \ge p_k = q$ such that $s_j \defs \frac{N}{p_j} - \frac{N}{p_{j-1}} \lt 1$.
  Furthermore, we set
  \begin{align*}
    \mu_j \defs
    \begin{cases}
      - \frac{s_j}{2}, & j \lt k, \\ 
      \mu + \sum_{i=1}^{k-1} \frac{s_i}{2}, & j = k
    \end{cases}
    \qquad \text{for $j \in \{1, \dots, k\}$}
  \end{align*}
  and choose $\lambda$ to be $\frac{\sigma}{2}$ or $0$ (depending on whether the operator $\nabla^\sigma$ is involved) in first $k-1$ steps below.
  By the case already proven, we obtain then $\newlc{part1} \gt 0$ such that
  \begin{align*}
    &\pe  \left\| \nabla^\sigma A^\mu \ure^{-t A} \varphi \right\|_{\leb p} \\
    &=    \left\| \nabla^\sigma \prod_{j=1}^k \left( A^{\mu_j} \ure^{-\frac{t}{k} A} \right) \varphi \right\|_{\leb p} \\
    &\le  \lc{part1} \ure^{-\frac{\delta}{k} t}
            \left\| \prod_{j=2}^k \left( A^{\mu_j} \ure^{-\frac{t}{k} A} \right) \varphi \right\|_{\sob{\sigma}{p_1}} \\
    &\le  \lc{part1} \ure^{-\frac{\delta}{k} t} \left(
            \left\| \nabla^\sigma \prod_{j=2}^k \left( A^{\mu_j} \ure^{-\frac{t}{k} A} \right) \varphi \right\|_{\leb{p_1}}
            + \left\| \prod_{j=2}^k \left( A^{\mu_j} \ure^{-\frac{t}{k} A} \right) \varphi \right\|_{\leb{p_1}}
          \right) \\
    &\le  \lc{part1}^{k-1} \ure^{-\frac{(k-1)\delta}{k} t} \left(
            \left\| \nabla^\sigma A^{\mu_k} \ure^{-\frac{t}{k} A} \varphi \right\|_{\leb{p_{k-1}}}
            + (k-1) \|A^{-\frac{\sigma}{2}}_{p_{k-1}}\| \left\| A^{\mu_k + \frac{\sigma}{2}} \ure^{-\frac{t}{k} A} \varphi \right\|_{\leb{p_{k-1}}}
          \right) \\
    &\le  \lc{part1}^k (1 + (k-1) \|A^{-\frac{\sigma}{2}}_{p_{k-1}}\|)
            t^{\lambda - \mu_k - \frac{\sigma + s_k}{2}} \ure^{-\delta t} \|\varphi\|_{\sob{\sigma}{p_k}} \\
    &=    \lc{part1}^k (1 + (k-1) \|A^{-\frac{\sigma}{2}}_{p_{k-1}}\|)
            t^{\lambda - \mu - \frac{\sigma + s}{2}} \ure^{-\delta t} \|\varphi\|_{\sob{\sigma}{q}}
    \qquad \text{for all $t \gt 0$ and $\varphi \in \sob{2\lambda}{q}$},
  \end{align*}
  where in the last two steps we have made use of $\mu + \frac{\sigma+s}{2} = \mu_k + \frac{\sigma + s_k}{2}$.
  Finally, for $s \in [1, \infty)$ and $p = \infty$, the desired estimate follows from a similar iterative argument.

  Ad~(ii):
    Due to $\eps \in (0, 2N)$, we have $q \defs \frac{2N}{\eps} \in (1, \infty)$ and hence $s \defs \frac{2N}{q} = \eps$.
    We set moreover $p \defs \infty$ and $\tilde \lambda \defs \lambda - \frac{\eps}{2}$.
    Then the statement follows from part~(i) (with $\lambda$ replaced by $\tilde \lambda$)
    and the embedding $\sob{2\lambda}{q} \embed \con{2\lambda+\eps}$,
    which in turn directly follows from the fact that $\|\cdot\|_{\sob{2\lambda}{q}}$
    is equivalent to the norm given in~\cite[4.4.1~(8)]{TriebelInterpolationTheoryFunction1978}.
\end{proof}

While Lemma~\ref{lm:semigroup_est} is quite general,
its main shortcoming is the lack of $L^\infty$-$L^\infty$ estimates.
These are provided by the following lemma, at least for the special case $\mu = \lambda = 0$.
\begin{lemma}\label{lm:linfty_linfty}
  Letting $G \subset \R^N$, $N \in \N$, be a smooth bounded domain
  and defining the operator $A$ as in Lemma~\ref{lm:frac_powers},
  we can find $C \gt 0$ such that
  \begin{align*}
          \|\nabla^\sigma \ure^{-tA} \varphi\|_{\leb \infty}
    \le   C \ure^{-t} \|\nabla^\sigma \varphi\|_{\leb \infty}
    \qquad \text{for all $t \ge 0$, $\varphi \in \sob\sigma\infty$ and $\sigma \in \{0, 1\}$}.
  \end{align*}
\end{lemma}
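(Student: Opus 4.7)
The two cases $\sigma \in \{0, 1\}$ call for rather different arguments.

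For $\sigma = 0$, my plan is to invoke the parabolic maximum principle directly. Setting $w \defs \ure^{-tA}\varphi$, I would observe that $w$ solves $w_t - \Delta w + w = 0$ in $G \times (0, \infty)$ with $\partial_\nu w = 0$ on $\partial G$ and $w(\cdot, 0) = \varphi$; comparison with the spatially constant sub- and supersolutions $W^\pm(t) \defs \pm \|\varphi\|_{\leb\infty}\ure^{-t}$ (which satisfy the same PDE and boundary condition while dominating $\pm\varphi$ initially) would then yield $|w(x, t)| \le \|\varphi\|_{\leb\infty}\ure^{-t}$ throughout $G \times [0, \infty)$, and hence the claim even with $C = 1$.

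For $\sigma = 1$, I would first decompose $A = -\Delta_N + I$ with $\Delta_N$ denoting the Neumann Laplacian on $G$, so that $\ure^{-tA} = \ure^{-t}\ure^{t\Delta_N}$ and the task reduces to producing a time-uniform bound
\begin{align*}
  \|\nabla \ure^{t\Delta_N}\varphi\|_{\leb\infty} \le C \|\nabla \varphi\|_{\leb\infty}
  \qquad \text{for all $t \ge 0$ and $\varphi \in \sob1\infty$.}
\end{align*}
I would then split into two time regimes. For $t \in (0, 1]$, classical parabolic Schauder theory for $u \defs \ure^{t\Delta_N}\varphi$, fed with the embedding $\sob1\infty \embed \con\alpha$ for some $\alpha \in (0, 1)$, yields $\|u\|_{C^{1+\alpha, (1+\alpha)/2}(\ol G \times [0, 1])} \le C_1 \|\nabla \varphi\|_{\leb\infty}$ and thereby the desired gradient bound on $[0, 1]$. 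For $t \ge 1$, I would exploit that constants are preserved by $\ure^{t\Delta_N}$ to replace $\varphi$ by its mean-free part $\psi \defs \varphi - |G|^{-1}\int_G \varphi$, whose $\leb\infty$-norm is controlled by $\|\nabla \varphi\|_{\leb\infty}$, and then combine the Schauder bound at $t = 1$ with the exponential decay of $\ure^{t\Delta_N}\psi$ (stemming from the spectral gap of $-\Delta_N$) to obtain the required uniform control.

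The main obstacle is the $\sigma = 1$ case: in contrast to the abstract semigroup framework underlying Lemma~\ref{lm:semigroup_est}, the short-time gradient estimate for the Neumann heat semigroup genuinely depends on the geometry of $\partial G$ and is not accessible from spectral-theoretic considerations alone.
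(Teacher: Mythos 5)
Your treatment of $\sigma=0$ is exactly the paper's: the comparison with the spatially constant functions $\pm\|\varphi\|_{L^\infty(G)}\ure^{-t}$ is what the paper's appeal to the maximum principle amounts to, and it gives the claim with $C=1$. The gap lies in the short-time part of your $\sigma=1$ argument. Parabolic Schauder theory cannot produce the bound $\|u\|_{C^{1+\alpha,(1+\alpha)/2}(\ol G\times[0,1])}\le C_1\|\nabla\varphi\|_{L^\infty(G)}$ from initial data that is merely Lipschitz: since $u(\cdot,0)=\varphi$, such a bound would force $\varphi\in C^{1+\alpha}(\ol G)$ (and, for estimates up to $t=0$, the first-order compatibility condition $\partial_\nu\varphi=0$ on $\partial G$), neither of which is assumed. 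What Schauder theory fed with $C^\alpha$ data actually yields is a $C^{\alpha,\alpha/2}$ bound up to $t=0$ together with gradient bounds of the form $\|\nabla u(\cdot,t)\|_{L^\infty(G)}\le C\,t^{-(1-\alpha)/2}\|\varphi\|_{C^\alpha(\ol G)}$, which degenerate precisely in the regime you need. Since $\nabla$ does not commute with $\ure^{t\Delta_N}$ on a domain, there is no cheap way to transfer the $L^\infty$ contraction to the gradient; the time-uniform estimate $\|\nabla\ure^{t\Delta_N}\varphi\|_{L^\infty(G)}\le C\|\nabla\varphi\|_{L^\infty(G)}$ near $t=0$ is a genuine theorem about the Neumann heat semigroup. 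This is exactly what the paper outsources: its entire proof consists of the maximum principle (for $\sigma=0$) and a citation of \cite[formula~(2.39)]{MoraSemilinearParabolicProblems1983}, i.e.\ the uniform boundedness of the semigroup on $C^1(\ol G)$ (for $\sigma=1$).

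Your large-time regime is sound in outline (pass to the mean-free part $\psi$, control $\|\psi\|_{L^\infty(G)}$ by $\|\nabla\varphi\|_{L^\infty(G)}$ via a Poincar\'e-type inequality, and combine spectral decay with the fixed-positive-time smoothing $\|\nabla\ure^{\Delta_N}\psi\|_{L^\infty(G)}\le C\|\psi\|_{L^\infty(G)}$), although for the lemma as stated no decay of $\nabla\ure^{t\Delta_N}$ is actually required --- the factor $\ure^{-t}$ already comes from the zeroth-order term in $A$. If you insist on a self-contained short-time argument, you could either reprove Mora's estimate from gradient bounds on the Neumann heat kernel, or --- since in the paper's applications $G$ is a ball or an interval, hence convex --- run a Bernstein argument showing that $|\nabla u|^2$ is a subsolution of the heat equation with $\partial_\nu|\nabla u|^2\le 0$ on $\partial G$. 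As written, however, the Schauder step does not close.
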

\begin{proof}
  This immediately follows from the maximum principle and \cite[formula~(2.39)]{MoraSemilinearParabolicProblems1983}.
\end{proof}

\section{Pointwise estimates for \tops{$\nabla v$}{grad v}. The parabolic case}\label{sec:parabolic}
\mainspace{\Omega}%
In this section, we deal with the remaining case $\tau \gt 0$
and first argue that we may without loss of generality assume $\tau = 1$.
If $v \in C^0(\Ombar \times [0, T)) \cap C^{2, 1}(\Ombar \times (0, T))$ is a classical solution of \eqref{prob:v_p}
for some $\tau \gt 0$, $T \in (0, \infty]$, $v_0 \in \con 0$ and $g \in C^0(\Ombar \times [0, T))$,
then the function $\tilde v$ defined by $\tilde v(x, t) \defs v(x, \frac{t}{\tau})$ for $(x, t) \in \Ombar \times [0, T \tau)$
solves
\begin{align*}
  \begin{cases}
    \tilde v_t = \Delta \tilde v - \tilde v + \tilde g  & \text{in $\Omega \times (0, T \tau)$}, \\
    \partial_\nu \tilde v = 0                           & \text{on $\partial \Omega \times (0, T \tau)$}, \\
    \tilde v(\cdot, 0) = v_0                            & \text{in $\Omega$}
  \end{cases}
\end{align*}
classically,
where $\tilde g(x, t) \defs g(x, \frac{t}{\tau})$ for $(x, t) \in \Ombar \times [0, T \tau)$.
Since Theorem~\ref{th:main_p} requires $C$ to be independent of $T$
and $\sup_{t \in (0, \tau T)} \|\tilde g(\cdot, t)\|_{\leb \qu} = \sup_{t \in (0, T)} \|g(\cdot, t)\|_{\leb \qu}$ for all $\qu \ge 1$,
we may thus henceforth indeed fix $\tau = 1$ and prove Theorem~\ref{th:main_p} only for this special case.

Moreover, given $M \gt 0$, let us abbreviate 
\begin{align}\label{eq:cond}
  \begin{cases}
    v_0 \text{ and } g \text{ comply with \eqref{eq:cond_v0_p} and \eqref{eq:cond_g_p}}, \\
    v \in C^0(\Ombar \times [0, T)) \cap C^{2, 1}(\Ombar \times (0, T)) \text{ is a nonnegative classical solution of \eqref{prob:v_p}}.
  \end{cases}
\end{align}

Before proving Theorem~\ref{th:main_p} in Lemma~\ref{lm:nabla_v_pw} below, we first collect several estimates,
starting with an $\sob1p$ bound for certain $p \gt 1$.
\begin{lemma}\label{lm:v_sob1p}
  Let $M \gt 0$, $\qu \in [1, n]$, $p_0 \gt 1$ and $p \in (1, \frac{n \qu}{n - \qu}) \cap (1, p_0]$.
  There is $C \gt 0$ such that if \eqref{eq:cond} holds, then
  \begin{align}\label{eq:v_sob1p:statement}
    \|\nabla v(\cdot, t)\|_{\leb p} \le C \qquad \text{for all $t \in (0, T)$}.
  \end{align}
\end{lemma}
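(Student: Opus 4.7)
The natural approach is to use the Duhamel representation
\begin{align*}
  v(\cdot, t) = \ure^{-tA} v_0 + \int_0^t \ure^{-(t-s) A} g(\cdot, s) \ds, \qquad t \in (0, T),
\end{align*}
where $A$ denotes the operator of Lemma~\ref{lm:frac_powers} realized on an appropriate base space $\leb r$, and then to estimate each summand separately in $\leb p$ by invoking the semigroup bounds of Section~\ref{sec:semigroup}.

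For the first summand, I would apply Lemma~\ref{lm:semigroup_est}~(i) with the assignments $\sigma = 1$, $\mu = 0$, its internal parameter $s = 0$, $q = p$ and $\lambda = \frac12$, which lies in $[0,\frac12] \cap [0, \frac12 + \frac1{2p})$. This yields $C_1 \gt 0$ such that
\begin{align*}
  \|\nabla \ure^{-tA} v_0\|_{\leb p} \le C_1 \ure^{-\delta t} \|v_0\|_{\sob1p} \qquad \text{for all $t \gt 0$};
\end{align*}
combined with the continuous embedding $\sob1{p_0} \embed \sob1p$ (available because $p \le p_0$) and \eqref{eq:cond_v0_p}, this gives a $t$-independent $\leb p$-bound of order $M$ for $\nabla \ure^{-tA} v_0$.

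For the Duhamel term, I would again invoke Lemma~\ref{lm:semigroup_est}~(i), now with $\sigma = 1$, $\mu = \lambda = 0$, $q = \qu$ and its internal parameter $s$ set to $\kappa \defs \max\{n/\qu - n/p,\, 0\}$. (Should $p \lt \qu$, one instead takes $q = p$ with $s = 0$ and absorbs the loss by Hölder's inequality $\|g\|_{\leb p} \le |\Omega|^{1/p - 1/\qu} \|g\|_{\leb\qu}$; the resulting bound is identical up to constants since $\kappa = 0$ in that regime.) This produces $C_3 \gt 0$ with
\begin{align*}
  \|\nabla \ure^{-(t-s)A} g(\cdot, s)\|_{\leb p}
  \le C_3 M (t-s)^{-\frac{1+\kappa}{2}} \ure^{-\delta(t-s)} \qquad \text{for all $s \in (0, t)$.}
\end{align*}
The decisive observation is that the hypothesis $p \lt \frac{n\qu}{n-\qu}$ (to be read as $p \lt \infty$ when $\qu = n$) is exactly equivalent to $\kappa \lt 1$, so $\frac{1+\kappa}{2} \lt 1$ and
\begin{align*}
  \int_0^t (t-s)^{-\frac{1+\kappa}{2}} \ure^{-\delta(t-s)} \ds \le \int_0^\infty \eta^{-\frac{1+\kappa}{2}} \ure^{-\delta \eta} \diff \eta
\end{align*}
is finite independently of $t$. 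Assembling the two estimates via the triangle inequality in $\leb p$ then yields \eqref{eq:v_sob1p:statement}.

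The main (minor) obstacle is the borderline case $\qu = 1$, for which Lemma~\ref{lm:semigroup_est}~(i) is not directly applicable since it requires $q \in (1, \infty)$; there one either falls back on the classical $\leb 1 \to \sob 1 p$ estimates for the Neumann heat semigroup (available precisely because $p \in (1, \frac{n}{n-1})$ is strictly subcritical) or reduces to the setting of \cite[Section~3]{WinklerBlowupProfilesLife}, as already referenced in the remark following Theorem~\ref{th:main_p}.
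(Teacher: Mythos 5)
Your proposal is correct and follows essentially the same route as the paper: the variation-of-constants formula combined with the two gradient semigroup estimates from Lemma~\ref{lm:semigroup_est} (an $\leb p$--$\sob1p$ bound for the initial datum and an $\leb\qu$--$\sob1p$ smoothing bound for the source), with the integrability of the resulting time singularity resting precisely on $p \lt \frac{n\qu}{n-\qu}$. Your explicit treatment of the edge cases $p \lt \qu$ and $\qu = 1$ is a welcome refinement that the paper's proof passes over silently.
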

\begin{proof}
  Letting $A$ be as in Lemma~\ref{lm:frac_powers},
  we apply Lemma~\ref{lm:semigroup_est} (with
  $\sigma \defs 1$, $\mu \defs \frac12$, $q \defs p$, $s \defs 0$, $\lambda \defs \frac12$ and
  $\sigma \defs 1$, $\mu \defs 0$, $q \defs \qu$, $s \defs \frac nq - \frac np$, $\lambda \defs 0$)
  to obtain $\newlc{1}, \newlc{2} \gt 0$ and $\delta \gt 0$ such that
  \begin{alignat*}{2}
          \|\nabla \ure^{-t A} \varphi\|_{\leb p}
    &\le  \lc{1} \ure^{-\delta t} \|\nabla \varphi\|_{\leb p}
    && \qquad \text{for all $t \gt 0$ and $\varphi \in \sob1p$},
  \intertext{and}
          \|\nabla \ure^{-t A} \varphi\|_{\leb p}
    &\le  \lc{2} t^{-\frac12 - \frac n2 (\frac1\qu - \frac1p)} \ure^{-\delta t} \|\varphi\|_{\leb \qu}
    && \qquad \text{for all $t \gt 0$ and $\varphi \in \leb \qu$}.
  \end{alignat*}
  Hence, assuming \eqref{eq:cond},
  we make use of the variation-of-constants formula, \eqref{eq:cond_v0_p} and \eqref{eq:cond_g_p} to see that
  \begin{align*}
          \|\nabla v(\cdot, t)\|_{\leb p}
    &\le  \left\| \nabla \ure^{-tA} v_0 \right\|_{\leb p}
          + \int_0^t \left\| \ure^{-(t-s) A} g(\cdot, s) \right\|_{\leb p} \ds \\
    &\le  \lc{1} \ure^{-\delta t} \|\nabla v_0\|_{\leb p}
          + \lc{2} \|g\|_{L^\infty((0, T); \leb\qu)} \int_0^t (t-s)^{-\frac12 - \frac n2 (\frac1\qu - \frac1p)} \ure^{-\delta(t-s)} \ds \\
    &\le  M \lc{1} |\Omega|^\frac{p_0}{p_0-p}
          + M \lc{2} \int_0^\infty s^{-\frac12 - \frac n2 (\frac1\qu - \frac1p)} \ure^{-\delta s} \ds
    \qquad \text{for all $t \in (0, T)$}.
  \end{align*}
  The last integral therein is finite because the assumption $p \lt \frac{n \qu}{n - \qu}$ warrants
  \begin{equation*}
        - \frac12 - \frac n2 \left( \frac1\qu - \frac1p \right)
    \gt - \frac12 - \frac n2 \left( \frac{n}{n\qu} - \frac{n-\qu}{n\qu} \right)
    =   -1.
    \qedhere
  \end{equation*}
\end{proof}

If $\qu \in [1, \frac n2]$, then the gradient bound obtained in Lemma~\ref{lm:v_sob1p} implies certain pointwise upper bounds for $v$.
For the special case $\qu = 1$, this has already been proven (similarly as below) in \cite[Lemma~3.2]{WinklerFinitetimeBlowupHigherdimensional2013}.
\begin{lemma}\label{lm:v_upper_est}
  Given $M \gt 0$, $\qu \in [1, \frac n2]$, $p_0 \gt 1$ and $\kappa \in (-\infty, -\frac{n - 2\qu}{\qu}) \cap (-\infty, -\frac{n-p_0}{p_0}]$,
  there is $C \gt 0$ with the following property:
  If $T \in (0, \infty]$ and \eqref{eq:cond} holds, then
  \begin{align*}
    v(x, t) \le C |x|^{\kappa} \qquad \text{for all $x \in \Ombar$ and $t \in (0, T)$}.
  \end{align*}
\end{lemma}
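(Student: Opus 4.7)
The plan is to convert the $\sob 1 p$~bound from Lemma~\ref{lm:v_sob1p} into a pointwise upper bound for $v$ by a one-dimensional Hölder argument in the radial variable. I would first fix $p \in (1, \frac{n\qu}{n-\qu}) \cap (1, p_0]$ satisfying $\frac{n-p}{p} \le -\kappa$, equivalently $p \ge \frac{n}{1-\kappa}$. The two hypotheses on $\kappa$ are tailored precisely to allow this choice: $\kappa \le -\frac{n-p_0}{p_0}$ translates to $\frac{n}{1-\kappa} \le p_0$, and $\kappa \lt -\frac{n-2\qu}{\qu}$ to $\frac{n}{1-\kappa} \lt \frac{n\qu}{n-\qu}$, so the admissible range for $p$ is non-empty and contains values strictly greater than $1$. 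Lemma~\ref{lm:v_sob1p} then supplies $C_1 \gt 0$ with $\|\nabla v(\cdot, t)\|_{\leb p} \le C_1$ for all $t \in (0, T)$, while testing the equation for $v$ against the constant function $1$ gives, via the Neumann boundary condition, $\|v(\cdot, t)\|_{\leb 1} \le C_2$ uniformly in $t$.

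Combined with the radial symmetry of $v$, the $L^1$~bound ensures that for each $t \in (0, T)$ there is $\rho^*(t) \in (R/2, R)$ with $v(\rho^*(t), t) \le C_3$, since otherwise $\omega_{n-1} \int_{R/2}^R v(\rho, t) \rho^{n-1} \diff \rho$ would exceed $C_2$. For $r \in (0, \rho^*(t)]$, the fundamental theorem of calculus together with Hölder's inequality yields
\begin{align*}
    v(r, t) &\le v(\rho^*(t), t) + \int_r^{\rho^*(t)} |v_r(s, t)| \diff s \\
    &\le C_3 + \omega_{n-1}^{-1/p} C_1 \left( \int_r^R s^{-\frac{n-1}{p-1}} \diff s \right)^{\frac{p-1}{p}},
\end{align*}
and since $p \lt n$, the final integral is bounded by a multiple of $r^{-\frac{n-p}{p}}$. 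For $r \in [\rho^*(t), R)$ the analogous computation, without any singular contribution from the inner endpoint, produces only a bounded term.

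Combining the two cases and absorbing constants via $r \le R$ together with $-\frac{n-p}{p} - \kappa \ge 0$, one arrives at $v(r, t) \le C_5 r^\kappa$ for all $r \in (0, R)$ and $t \in (0, T)$, as desired. I expect the argument to be essentially routine given Lemma~\ref{lm:v_sob1p}; the only real bookkeeping is the verification that the admissible range of $p$ is non-empty for every $\kappa$ satisfying the hypotheses, which is precisely the role of the two conditions on $\kappa$.
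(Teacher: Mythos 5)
Your argument is correct and follows essentially the same route as the paper's proof: the same choice of $p \in (1, \frac{n\qu}{n-\qu}) \cap (1, p_0]$ with $\kappa \le -\frac{n-p}{p}$, the gradient bound from Lemma~\ref{lm:v_sob1p}, a uniform $L^1$ bound on $v$ yielding a radius $r_0(t) \in (\frac R2, R)$ where $v$ is controlled, and then the fundamental theorem of calculus plus Hölder's inequality in the radial variable. The only cosmetic difference is that the paper treats both cases $r \lessgtr r_0(t)$ at once via $\min\{r, r_0(t)\} \ge r/2$, whereas you split them; this changes nothing of substance.
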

\begin{proof}
  For fixed $\kappa \le -\frac{n-p_0}{p_0}$ with
  \begin{align*}
        \kappa
    \lt - \frac{n - 2\qu}{\qu}
    =   - \frac{(n - \qu) - \qu}{\qu}
    =   -\frac{n - \frac{n \qu}{n - \qu}}{\frac{n \qu}{n - \qu}},
  \end{align*}
  we may choose $p \in (1, \frac{n \qu}{n - \qu}) \cap (1, p_0]$ such that $\kappa \le -\frac{n-p}{p}$.
  Then Lemma~\ref{lm:v_sob1p} warrants that there is $\newlc{v_sob1p} \gt 0$ such that \eqref{eq:v_sob1p:statement} (with $C$ replaced by $\lc{v_sob1p}$)
  is fulfilled whenever \eqref{eq:cond} holds.
  Moreover, we let
  \begin{align*}
    \newlc1 \defs M\max\left\{|\Omega|^\frac{p_0-1}{p_0}, |\Omega|^\frac{\qu-1}{\qu}\right\}
    \quad \text{as well as} \quad
    \newlc2 \defs \frac{\lc1}{\left|B_R(0) \setminus B_\frac R2(0)\right|}
  \end{align*}
  and now assume \eqref{eq:cond}. 
  Since
  \begin{align*}
    \|v_0\|_{\leb 1} \le |\Omega|^\frac{p_0-1}{p_0} \|v_0\|_{\sob1{p_0}} \le \lc1
    \quad \text{and} \quad
    \|g\|_{L^\infty((0, T); \leb1)} \le |\Omega|^\frac{\qu-1}{\qu} \|g\|_{L^\infty((0, T); \leb\qu)} \le \lc1,
  \end{align*}
  by \eqref{eq:cond_v0_p}, \eqref{eq:cond_g_p} and the definition of $\lc1$,
  the comparison principle asserts $\intom v(\cdot, t) \le \lc1$ for all $t \in [0, T)$.

  Thus, assuming that there is $t \in [0, T)$ such that $v(r, t) \gt \lc2$ for all $r \in (\frac R2, R)$ would lead to the contradiction
  \begin{align*}
        \lc1
    \ge \intom v(\cdot, t)
    \ge \int_{B_R(0) \setminus B_\frac R2(0)} v(\cdot, t)
    \gt \int_{B_R(0) \setminus B_\frac R2(0)} \lc2
    =   \lc1,
  \end{align*}
  and therefore, for all $t \in [0, T)$, we may choose $r_0(t) \in (\frac R2, R)$ with $v(r_0(t), t) \le \lc2$.
  We then calculate
  \begin{align*}
          v(r, t) - v(r_0(t), t)
    &=    \int_{r_0(t)}^r \rho^\frac{n-1}{p} v_r(\rho, t) \cdot \rho^{-\frac{n-1}{p}} \drho \\
    &\le  \frac{\|\nabla v(\cdot, t)\|_{\leb p}}{\sqrt[p]{\omega_{n-1}}}
          \left| \int_{r_0(t)}^r \rho^{-\frac{n-1}{p-1}} \right|^\frac{p-1}{p} \\
    &\le  \frac{\lc{v_sob1p}}{\sqrt[p]{\omega_{n-1}}}
          \left| \int_{r_0(t)}^r \rho^{-\frac{n-1}{p-1}} \right|^\frac{p-1}{p}
    \qquad \text{for all $r \in (0, R)$ and $t \in (0, T)$}.
  \end{align*}
  
  As $p \in (1, n)$ because of $\qu \le \frac n2$ and $\frac{n \qu}{n - \qu} \le n$
  and since $r_0(t) \gt \frac{R}{2} \ge \frac{r}{2}$ for all $r \in (0, R)$ and $t \in (0, T)$,
  we have therein
  \begin{align*}
          \left| \int_{r_0(t)}^r \rho^{-\frac{n-1}{p-1}} \right|^\frac{p-1}{p}
    &\le  \left( \int_{\min\{r, r_0(t)\}}^\infty \rho^{-\frac{n-p}{p-1}-1} \right)^\frac{p-1}{p} \\
    &=    \left( \frac{p-1}{n-p} \right)^\frac{p-1}{p} \min\{r, r_0(t)\}^{-\frac{n-p}{p}} \\
    &\le  2^\frac{n-p}{p} \left( \frac{p-1}{n-p} \right)^\frac{p-1}{p} r^{-\frac{n-p}{p}}
    \qquad \text{for all $r \in (0, R)$ and $t \in (0, T)$}.
  \end{align*}
  Moreover noting that $v(r_0(t), t) \le \lc2 \le \lc2 R^\frac{n-p}{p} r^{-\frac{n-p}{p}}$ for all $r \in (0, R)$ and $t \in (0, T)$,
  we obtain the statement.
\end{proof}

Since $\qu \gt \frac n2$ implies $\frac{2\qu - n}{\qu} \gt 0$,
one cannot expect that Lemma~\ref{lm:v_upper_est} holds for any $\qu \gt \frac n2$.
However, we have the following analogon of said lemma.

\begin{lemma}\label{lm:v_hoelder}
  For $M \gt 0$, $\qu \in (\frac n2, n]$, $p_0 \gt 1$ and $\kappa \in (0,  \frac{2\qu - n}{\qu}) \cap (0, \frac{p_0-n}{p_0}]$,
  there is $C \gt 0$ such that if $T \in (0, \infty]$ and \eqref{eq:cond} holds, then
  \begin{align*}
    |v(x, t) - v(0, t)| \le C |x|^\kappa
    \qquad \text{for all $x \in \Ombar$ and $t \in [0, T)$}.
  \end{align*}
\end{lemma}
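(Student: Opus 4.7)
The plan is to combine the uniform $\sob1p$ bound furnished by Lemma~\ref{lm:v_sob1p} with a radial Morrey-type estimate. Note the assertion is only meaningful when $p_0 \gt n$, since otherwise the interval $(0, \frac{p_0-n}{p_0}]$ is empty; I tacitly assume this.

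First I would fix an exponent $p$ with $p \gt n$, $p \le p_0$ and $p \lt \frac{n\qu}{n-\qu}$, chosen so that $\kappa \le 1 - \frac{n}{p}$, equivalently $p \ge \frac{n}{1-\kappa}$. Such $p$ exists: the bound $\kappa \lt \frac{2\qu-n}{\qu}$ rewrites as $\frac{n}{1-\kappa} \lt \frac{n\qu}{n-\qu}$, the bound $\kappa \le \frac{p_0-n}{p_0}$ rewrites as $\frac{n}{1-\kappa} \le p_0$, and $\kappa \gt 0$ forces $\frac{n}{1-\kappa} \gt n$. Lemma~\ref{lm:v_sob1p} then supplies a constant $C_1 \gt 0$ with $\|\nabla v(\cdot,t)\|_{\leb p} \le C_1$ for all $t \in (0, T)$, whenever \eqref{eq:cond} holds.

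The key computation is carried out in radial coordinates. For any $t \in [0, T)$ and $r \in (0, R]$, the fundamental theorem of calculus and Hölder's inequality yield
\begin{align*}
  |v(r, t) - v(0, t)|
   &\le \int_0^r |v_r(\rho, t)| \rho^\frac{n-1}{p} \cdot \rho^{-\frac{n-1}{p}} \drho \\
   &\le \omega_{n-1}^{-\frac1p} \|\nabla v(\cdot,t)\|_{\leb p} \left( \int_0^r \rho^{-\frac{n-1}{p-1}} \drho \right)^\frac{p-1}{p}.
\end{align*}
The inner integral is finite precisely because $p \gt n$, and evaluates to $\frac{p-1}{p-n} r^\frac{p-n}{p-1}$; hence $|v(r, t) - v(0, t)| \le C_2 r^{1-\frac{n}{p}}$ for some $C_2 \gt 0$. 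Since $1 - \frac{n}{p} \ge \kappa$ and $r \le R$, this finally gives $|v(r, t) - v(0, t)| \le C_2 R^{1-\frac{n}{p}-\kappa} r^\kappa$, proving the claim.

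The main challenge is really only the organization of the parameters; once $p$ is chosen so that the Morrey-type exponent $1 - \frac{n}{p}$ dominates $\kappa$, the rest reduces to the one-dimensional Hölder inequality in radial coordinates already used in the proof of Lemma~\ref{lm:v_upper_est}. No new semigroup estimate is needed.
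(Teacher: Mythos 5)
Your proof is correct and follows essentially the same route as the paper: both rest on the uniform $W^{1,p}$ bound from Lemma~\ref{lm:v_sob1p} with $p = \frac{n}{1-\kappa}$ (your parameter discussion reproduces exactly the paper's verification that this $p$ is admissible), after which the paper simply invokes the Morrey embedding $\sob1p \embed \con\kappa$ while you prove the needed pointwise consequence by hand via the radial Hölder computation. The explicit computation is just a proof of the (centered) Morrey estimate in radial coordinates, so no substantive difference arises.
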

\begin{proof}
  Let $\kappa \in (0, \frac{2\qu - n}{\qu})$.
  The assumption $\qu \in (\frac n2, n]$ implies $\kappa \in (0, 1)$,
  hence $p \defs \frac{n}{1-\kappa} \in (1, \frac{n\qu}{n-\qu}) \cap (1, p_0]$.
  Thus, the statement follows from Lemma~\ref{lm:v_sob1p} and Morrey's inequality,
  which because of $\kappa = 1 - \frac np$ asserts that $\sob1p$ embeds into $\con\kappa$.
\end{proof}

Lemma~\ref{lm:v_hoelder} now allows us to show that a function resembling $|x|^\beta v$
solves a suitable initial boundary value problem.
In Lemma~\ref{lm:nabla_v_pw} below, we then apply semigroup arguments to obtain certain gradient bounds for this function
implying \eqref{eq:main_p:statement}.
\begin{lemma}\label{lm:z_eq}
  Let $M \gt 0$, $\qu \in [1, n]$, $\beta \gt \frac{n-\qu}{\qu}$,
  \begin{align}\label{eq:z_eq:zeta}
    \zeta \in C^\infty([0, R]) \text{ with  $\zeta(r) = r$ for all $r \in [0, \tfrac R2]$, $\zeta_r  \ge 0$ in $(0, R)$ and $\zeta_r(R) = 0$}
  \end{align}
  and
  \begin{align*}
    p_0 \gt
    \begin{cases}
      1, & \qu \in [1, \frac n2], \\
      \frac{n}{\min\{1, \beta\}}, & \qu \in (\frac n2, n].
    \end{cases}
  \end{align*}
  There exist $b_1, b_2, b_3 \in C^\infty((0, R))$ and $C \gt 0$ such that
  \begin{align}\label{eq:z_eq:b_est}
    |b_1(r)| \le C r^{\beta-2}, \qquad
    |b_2(r)| \le C r^{\beta-1} \qquad \text{and} \qquad 
    |b_3(r)| \le C r^{\beta}
    \qquad\text{for all $r \in (0, R)$},
  \end{align}
  and, moreover, the following holds:
  Let $T \in [0, \infty)$, $v_0, g, v$ as in \eqref{eq:cond} and
  \begin{align}\label{eq:z_eq:def_v_tilde}
    \tilde v(r, t) \defs
    \begin{cases}
      v(r, t),           & \qu \in [1, \frac n2], \\
      v(r, t) - v(0, t), & \qu \in (\frac n2, n]
    \end{cases}
    \qquad \text{for $r \in [0, R]$ and $t \in [0, T)$}.
  \end{align}
  Then the function $z \defs \zeta^\beta \tilde v$
  belongs to
  $C^0([0, R] \times [0, T)) \cap C^{1, 1}([0, R] \times (0, T)) \cap C^{2, 1}((0, R) \times (0, T))$
  and solves
  \begin{align}\label{eq:z_eq:prob}
    \begin{cases}
      z_t = z_{rr} - z + b_1 \tilde v + b_2 v_r + b_3 g - [\sign(\qu - \frac n2)]_+ \zeta^\beta v_t(0, t) , & \text{in $(0, R) \times (0, T)$}, \\
      z_r = 0,                                                                         & \text{in $\{0, R\} \times (0, T)$}, \\
      z(\cdot, 0) = \zeta^\beta \tilde v(\cdot, 0)                                     & \text{in $(0, R)$}
    \end{cases}
  \end{align}
  classically.
  (Here and below, $[\sign \xi]_+ = 1$ for $\xi \gt 0$ and $[\sign \xi]_+ = 0$ for $\xi \le 0$.)
\end{lemma}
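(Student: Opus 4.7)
The approach is a direct computation: I would define $b_1, b_2, b_3$ explicitly from $\zeta$ and its derivatives in such a way that the asserted PDE reduces, after substituting the equation solved by $v$, to an algebraic identity, and then verify the decay bounds and the regularity assertions separately.

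Concretely, I would set
\begin{align*}
  b_1 &\defs -\beta(\beta-1)\zeta^{\beta-2}\zeta_r^2 - \beta\zeta^{\beta-1}\zeta_{rr}, \\
  b_2 &\defs \zeta^\beta \cdot \frac{n-1}{r} - 2\beta\zeta^{\beta-1}\zeta_r, \\
  b_3 &\defs \zeta^\beta.
\end{align*}
Since $\zeta(r)=r$, $\zeta_r(r)=1$ and $\zeta_{rr}(r)=0$ on $(0, \tfrac{R}{2}]$ by \eqref{eq:z_eq:zeta}, these formulas reduce there to $-\beta(\beta-1) r^{\beta-2}$, $(n-1-2\beta) r^{\beta-1}$ and $r^{\beta}$, respectively; on the compact remainder $[\tfrac{R}{2},R]$ the coefficients are smooth and therefore uniformly bounded, yielding \eqref{eq:z_eq:b_est} for a suitable $C$. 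To derive the PDE itself, differentiation of $z=\zeta^\beta \tilde v$ via the product rule (using $\tilde v_r=v_r$ and $\tilde v_{rr}=v_{rr}$ because $v(0,t)$ is constant in $r$) gives
\begin{align*}
  z_{rr}=\beta(\beta-1)\zeta^{\beta-2}\zeta_r^2 \tilde v + \beta\zeta^{\beta-1}\zeta_{rr}\tilde v + 2\beta\zeta^{\beta-1}\zeta_r v_r + \zeta^\beta v_{rr},
\end{align*}
while $z_t=\zeta^\beta \tilde v_t$ equals $\zeta^\beta v_t$ when $\qu\in[1,\tfrac{n}{2}]$ and equals $\zeta^\beta v_t-\zeta^\beta v_t(0,t)$ when $\qu\in(\tfrac{n}{2},n]$. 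Substituting the radial form $v_t=v_{rr}+\tfrac{n-1}{r}v_r-v+g$ and collecting terms according to the chosen $b_j$ then produces \eqref{eq:z_eq:prob}.

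For the regularity claims, interior smoothness $z\in C^{2,1}((0,R)\times(0,T))$ is immediate from smoothness of $\zeta^\beta$ on $(0,R)$ and the $C^{2,1}$-regularity of $v$, and continuity $z\in C^0([0,R]\times[0,T))$ follows from the continuity of $\tilde v$ and $\zeta^\beta$. The only genuinely delicate point is the limit behavior as $r\to 0$. Writing $z_r=\beta\zeta^{\beta-1}\zeta_r\tilde v+\zeta^\beta v_r$, the second summand tends to $0$ since $v_r(0,t)=0$ by radial smoothness and $\zeta^\beta(0)=0$. For the first summand, which reduces to $\beta r^{\beta-1}\tilde v(r,t)$ near $r=0$: when $\qu\in[1,\tfrac{n}{2}]$, the assumption $\beta>\tfrac{n-\qu}{\qu}\ge 1$ forces $\beta>1$ and boundedness of $\tilde v=v$ suffices; when $\qu\in(\tfrac{n}{2},n]$ and $\beta$ may lie below $1$, the standing hypothesis $p_0>\tfrac{n}{\min\{1,\beta\}}$ is tailored so that $\tfrac{p_0-n}{p_0}>1-\beta$, which allows the admissible range for the Hölder exponent in Lemma~\ref{lm:v_hoelder} to include some $\kappa>1-\beta$, whence $|\tilde v(r,t)|\le C r^{\kappa}$ gives $r^{\beta-1+\kappa}\to 0$. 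The condition $z_r(R,t)=0$ is immediate from $\zeta_r(R)=0$ and the Neumann datum $v_r(R,t)=0$, and the joint continuity of $z_r$ and $z_t$ up to $r\in\{0,R\}$ needed for $C^{1,1}([0,R]\times(0,T))$ follows from the same estimates combined with the interior regularity of $v$.

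The main obstacle is thus the boundary behavior at $r=0$ when $\beta<1$; this is exactly what the Hölder bound from Lemma~\ref{lm:v_hoelder} is designed to handle, and it is the sole reason the hypothesis on $p_0$ has to be strengthened in the case $\qu>\tfrac{n}{2}$.
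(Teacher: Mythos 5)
Your proposal is correct and follows essentially the same route as the paper: the identical explicit choices of $b_1, b_2, b_3$, the same product-rule computation combined with the radial form of the equation for $v$, and the same case distinction at $r=0$ ($\beta>1$ for $\qu\le\frac n2$, the Hölder bound of Lemma~\ref{lm:v_hoelder} with $\kappa>1-\beta$ for $\qu>\frac n2$). No gaps.
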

\begin{proof}
  Since the assumptions on $\zeta$ warrant $\|\zeta\|_{\con[{[0,R]}]2} \lt \infty$ and $\sup_{r \in (0, R)} \frac{\zeta(r)}{r} \lt \infty$,
  there is $C \gt 0$ such that the functions
  \begin{align*}
    b_1 &\defs -\beta (\beta-1) \zeta^{\beta-2} \zeta_r^2 - \beta \zeta^{\beta-1} \zeta_{rr} \\
    b_2 &\defs -2 \beta \zeta^{\beta-1} \zeta_r + \frac{n-1}{r} \zeta^\beta \qquad \text{and} \\
    b_3 &\defs \zeta^\beta
  \end{align*}
  comply with \eqref{eq:z_eq:b_est}.
  As direct calculations give
  \begin{align*}
        z_r
    &=  \beta \zeta^{\beta-1} \zeta_r \tilde v
        + \zeta^\beta v_r, \\
        z_{rr}
    &=  [\beta (\beta-1) \zeta^{\beta-2} \zeta_r^2 + \beta \zeta^{\beta-1} \zeta_{rr}] \tilde v
        + 2 \beta \zeta^{\beta-1} \zeta_r v_r
        + \zeta^{\beta} v_{rr} \qquad \text{and} \\
        v_t
    &=  v_{rr} + \frac{n-1}{r} v_r - v + g
  \end{align*}
  in $(0, R) \times (0, T)$,
  we obtain moreover
  \begin{align*}
        \zeta^\beta v_t
    &=  \zeta^\beta v_{rr} + \frac{n-1}{r} \zeta^\beta v_r - \zeta^\beta v + \zeta^\beta g \\
    &=  z_{rr} - \left[\beta (\beta-1) \zeta^{\beta-2} \zeta_r^2 + \beta \zeta^{\beta-1} \zeta_{rr}\right] \tilde v
        + \left[-2 \beta \zeta^{\beta-1} \zeta_r + \frac{n-1}{r} \zeta^\beta\right] v_r
        - z
        + \zeta^\beta g
    \qquad \text{in $(0, R) \times (0, T)$}.
  \end{align*}
  Thus,
  \begin{align*}
        z_t(r, t)
    &=  \zeta^\beta(r) v_t(r, t) - \left[\sign\left(\qu - \frac n2\right)\right]_+ \zeta^\beta(r) v_t(0, t)
    \qquad \text{for all $(r, t) \in [0, R) \times [0, T)$},
  \end{align*}
  implying that the first equation in \eqref{eq:z_eq:prob} holds.

  Since the third equation in \eqref{eq:z_eq:prob} is a direct consequence of the definition of $z$
  and $\zeta_r(R) = 0$ and $v_r(R, \cdot) \equiv 0$ and $\zeta(R) \gt 0$ imply $z_r(R, \cdot) \equiv 0$,
  it only remains to be shown that $z_r(0, \cdot) \equiv 0$ in $(0, T)$.
  For $\qu \in [1, \frac n2]$ and hence $\beta \gt 1$, this holds because then $\lim_{r \sea 0} \zeta^{\beta-1}(r) = 0$.
  Thus, we suppose now that $\qu \in (\frac n2, n]$.
  As $\frac{2\qu - n}{\qu} \gt \max\{1 - \beta, 0\}$ and $\frac{p_0 - n}{p_0} \gt \max\{1 - \beta, 0\}$,
  we may choose $\kappa \in (\max\{1 - \beta, 0\}, \min\{\frac{2\qu - n}{\qu}, \frac{p_0 - n}{p_0}\})$
  and apply Lemma~\ref{lm:v_hoelder} to obtain $\newlc{v_time_hoelder} \gt 0$
  such that $|v(r, t) - v(0, t)| \le \lc{v_time_hoelder} r^{\kappa}$ for all $(r, t) \in (0, R) \times (0, T)$.
  Thus, $|\zeta^{\beta-1}(r) \tilde v(r, t)| \le \lc{v_time_hoelder} r^{\beta - 1 + \kappa} \ra 0$ as $\frac R2 \ge r \sea 0$.
\end{proof}

For $\qu \in (\frac n2, n]$, we need to handle the term $\zeta^\beta v_t(0, \cdot)$ in the first equation in \eqref{eq:z_eq:prob}
if we want to apply semigroup arguments to the problem \eqref{eq:z_eq:prob}.
To that end, we argue similar as in \cite[Lemma~3.4]{WangEtAlGlobalClassicalSolutions2018}
and derive sufficiently strong time regularity in
\begin{lemma}\label{lm:v_time_hoelder}
  Suppose $M \gt 0$, $\qu \in (\frac n2, n]$, $p_0 \gt n$ and $\theta \in (0, \min\{\frac{2\qu - n}{2\qu}, \frac{p_0 - n}{2p_0}\})$.
  Then there exists $C \gt 0$ such that for $T \in (0, \infty]$ and $v_0, g, v$ complying with \eqref{eq:cond}, we have
  \begin{align}\label{eq:v_time_hoelder:statement}
    |v(0, t_1) - v(0, t_2)| \le C |t_1 - t_2|^\theta
    \qquad \text{for all $t_1, t_2 \in [0, T)$}.
  \end{align}
\end{lemma}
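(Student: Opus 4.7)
The plan is to combine the variation-of-constants representation of $v$ with the semigroup estimates of Section~\ref{sec:semigroup} in order to control $\|v(\cdot, t_2) - v(\cdot, t_1)\|_{\leb\infty}$, from which the claim at $x = 0$ follows. As preparation, I first derive a uniform-in-$t$ Hölder bound on $v(\cdot, t)$. Since $p_0 \gt n$ and $\qu \gt \frac n2$ ensure $n \lt \min\{p_0, \frac{n\qu}{n-\qu}\}$ (with the convention $\frac{n\qu}{n-\qu} \defs \infty$ for $\qu = n$), I choose $p \in (n, \frac{n\qu}{n-\qu}) \cap (n, p_0]$ such that $1 - \frac np \gt 2\theta$; this is possible since the supremum of $1 - \frac np$ over admissible $p$ equals $\min\{\frac{p_0 - n}{p_0}, \frac{2\qu - n}{\qu}\}$, which by hypothesis strictly exceeds $2\theta$. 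Lemma~\ref{lm:v_sob1p} supplies a uniform $\leb p$ bound on $\nabla v(\cdot, t)$, and testing \eqref{prob:v_p} against the constant $1$ yields a uniform $\leb1$ bound on $v(\cdot, t)$; Poincaré--Wirtinger and Morrey's embedding then combine to provide a uniform bound on $\|v(\cdot, t)\|_{\con{1 - \frac np}}$, and in particular on $\|v(\cdot, t)\|_{\leb\infty}$.

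With $A = A_q$ the Neumann realization of $-\Delta + 1$ on $\leb q$ for some fixed $q \in (1, \infty)$ and $h \defs t_2 - t_1 \gt 0$, the variation-of-constants formula reads
\begin{align*}
  v(\cdot, t_2) - v(\cdot, t_1) = (\ure^{-hA} - \id) v(\cdot, t_1) + \int_{t_1}^{t_2} \ure^{-(t_2 - s) A} g(\cdot, s) \ds.
\end{align*}
Assume $h \le 1$; for $h \gt 1$ the claim follows from the uniform $\leb\infty$ bound on $v$ together with $h^\theta \ge 1$. For the integral, Lemma~\ref{lm:semigroup_est}(i) with $p = \infty$, $q = \qu$, $\sigma = \mu = \lambda = 0$ and $s$ slightly exceeding $\frac n\qu$ majorizes $\|\ure^{-\tau A} g(\cdot, s)\|_{\leb\infty}$ by $C \tau^{-\frac{n}{2\qu} - \eps_1} M$, whence integration produces a bound of order $h^{\frac{2\qu - n}{2\qu} - \eps_1}$ for arbitrarily small $\eps_1 \gt 0$. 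For the first summand, I use the identity $(\ure^{-hA} - \id) \varphi = -\int_0^h A \ure^{-\tau A} \varphi \dtau$, valid for $\varphi = v(\cdot, t_1)$ by the analyticity of the semigroup, together with Lemma~\ref{lm:semigroup_est}(ii) for $\sigma = 0$, $\mu = 1$ at a fixed $\lambda \in (\theta, \frac12)$ with $2\lambda \lt 1 - \frac np$, to obtain
\begin{align*}
  \|(\ure^{-hA} - \id) v(\cdot, t_1)\|_{\leb\infty} \le C \int_0^h \tau^{\lambda - 1 - \eps_2} \dtau \cdot \|v(\cdot, t_1)\|_{\con{2\lambda}} \le C' h^{\lambda - \eps_2}
\end{align*}
for small $\eps_2 \in (0, \lambda - \theta)$. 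Choosing $\eps_1, \eps_2$ small enough that both polynomial exponents are at least $\theta$ and restricting to $x = 0$ yields \eqref{eq:v_time_hoelder:statement}.

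The main obstacle is balancing the two constraints on $\lambda$ in the estimate for $(\ure^{-hA} - \id) v(\cdot, t_1)$: the $\tau^{\lambda - 1}$-singularity must be integrable ($\lambda \gt \theta$), yet Lemma~\ref{lm:semigroup_est}(ii) only applies if $2\lambda$ does not exceed the Hölder regularity $1 - \frac np$ available for $v(\cdot, t_1)$. The best value of $1 - \frac np$ attainable is $\min\{\frac{p_0 - n}{p_0}, \frac{2\qu - n}{\qu}\}$, the two cases being distinguished by whether $p_0$ or $\frac{n\qu}{n-\qu}$ is smaller; halving this bound is exactly the restriction $\theta \lt \min\{\frac{p_0 - n}{2p_0}, \frac{2\qu - n}{2\qu}\}$ appearing in the hypothesis.
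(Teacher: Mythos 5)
Your argument is correct, but it follows a genuinely different route from the paper's. The paper applies the variation-of-constants formula anchored at $t=0$ and splits $\|v(\cdot,t_2)-v(\cdot,t_1)\|_{\leb\infty}$ into three pieces: the difference $\|\ure^{-t_2A}v_0-\ure^{-t_1A}v_0\|_{\leb\infty}$ (handled by writing it as $\|\int_{t_1}^{t_2}A^{\frac12}\ure^{-sA}A^{\frac12}v_0\ds\|_{\leb\infty}$ and using the $W^{1,p}$ bound on $v_0$), the forcing over $(t_1,t_2)$ (treated exactly as your integral term), and the semigroup difference acting on the past forcing, which the paper controls via a double integral $\int_0^{t_1}\int_{t_1}^{t_2}\|A\ure^{-(\sigma-s)A}g(\cdot,s)\|_{\leb\infty}\dsigma\ds$. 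You instead anchor Duhamel at $t_1$, which collapses the first and third pieces into the single term $(\ure^{-hA}-\id)v(\cdot,t_1)$, at the price of needing a uniform-in-time spatial Hölder bound on $v(\cdot,t_1)$ as an a priori input (which you correctly extract from Lemma~\ref{lm:v_sob1p}, an $L^1$ bound and Morrey, and which is essentially the content of Lemma~\ref{lm:v_hoelder} plus an $L^\infty$ bound). Your constraint bookkeeping is right: the admissible Hölder exponents $1-\frac np$ have supremum $\min\{\frac{p_0-n}{p_0},\frac{2\qu-n}{\qu}\}$, and halving it to accommodate $2\lambda\lt 1-\frac np$ in Lemma~\ref{lm:semigroup_est}~(ii) reproduces exactly the hypothesis on $\theta$; the paper arrives at the same restriction through the exponents $-\frac12-\frac{n}{2p}-\eps$ and $-(1-\theta)$ in its $I_1$ and $I_2$, $I_3$ estimates. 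What your version buys is the avoidance of the double integral and of any direct manipulation of $v_0$ beyond its $W^{1,p_0}$ norm; what the paper's version buys is that it works directly from the data $(v_0,g)$ without invoking interior regularity of the solution itself. Both are legitimate, and the minor technical point in your argument --- justifying $(\ure^{-hA}-\id)\varphi=-\int_0^h A\ure^{-\tau A}\varphi\dtau$ for $\varphi$ merely Hölder continuous --- is of the same nature as the paper's own treatment of $I_1$ and is covered by the integrability of $\tau^{\lambda-1-\eps_2}$ together with a density argument.
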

\begin{proof}
  Since $0 \lt \theta \lt \frac12 - \frac n{2p_0}$,
  we can choose $p \in (1, p_0)$ and $\eps \gt 0$ such that $\theta = \frac12 - \frac{n}{2p} - \eps$.
  Letting $A$ be as in Lemma~\ref{lm:frac_powers},
  by Lemma~\ref{lm:frac_powers} and Lemma~\ref{lm:semigroup_est}~(i)
  (with $\sigma \defs 0$, $\mu \defs \frac12$, $q \defs p$, $p \defs \infty$, $s \defs \frac{n}{q} + \eps$, $\lambda \defs 0$),
  we find $\newlc{sg1}, \newlc{sg2} \gt 0$ such that
  \begin{alignat}{2}\label{eq:v_time_hoelder_c1}
          \|A^\frac12 \varphi\|_{\leb p}
    &\le  \lc{sg1} \|\varphi\|_{\sob1p}
    &&\qquad \text{for all $\varphi \in \sob1p$}
  \intertext{and}\label{eq:v_time_hoelder_c2}
          \left\| A^\frac12 \ure^{-t A} \varphi \right\|_{\leb \infty}
    &\le  \lc{sg2} t^{-\frac12 - \frac{n}{2p} - \eps} \|\varphi\|_{\leb p}
    &&\qquad \text{for all $t \gt 0$, $\varphi \in \leb p$}.
  \intertext{Moreover, since $1 - \theta \gt \frac{n}{2\qu}$,
  we may again employ Lemma~\ref{lm:semigroup_est}~(i)
  (with $\sigma \defs 0$, $\mu \defs \mu$, $q \defs \qu$, $p \defs \infty$, $s \defs 1 - \theta$, $\lambda \defs 0$)
  in order to obtain $\newlc{sg3} \gt 0$ with}
          \left\| A^\mu \ure^{-t A} \varphi \right\|_{\leb \infty} \label{eq:v_time_hoelder_c3}
    &\le  \lc{sg3} t^{-\mu - (1 - \theta)} \|\varphi\|_{\leb \qu}
    &&\qquad \text{for all $t \gt 0$, $\varphi \in \leb \qu$ and $\mu \in \{0, 1\}$}.
  \end{alignat}

  Henceforth fixing $0 \le t_1 \lt t_2 \lt T$ and assuming \eqref{eq:cond},
  we then obtain by the variation-of-constants formula
  \begin{align*}
    &\pe  \left\| v(\cdot, t_2) - v(\cdot, t_1) \right\|_{\leb\infty} \\
    &\le  \left\| \ure^{-t_2 A} v_0 - \ure^{-t_1 A} v_0 \right\|_{\leb\infty}
          + \left\|
            \int_0^{t_2} \ure^{-(t_2-s) A} g(\cdot, s) \ds
            - \int_0^{t_1} \ure^{-(t_1-s) A} g(\cdot, s) \ds
          \right\|_{\leb\infty} \\
    &\le  \left\| \ure^{-t_2 A} v_0 - \ure^{-t_1 A} v_0 \right\|_{\leb\infty} \\
    &\pe  + \int_{t_1}^{t_2} \left\| \ure^{-(t_2-s) A} g(\cdot, s) \right\|_{\leb \infty} \ds
          + \int_0^{t_1} \left\| \left[ \ure^{-(t_2-s) A} - \ure^{-(t_1-s) A} \right] g(\cdot, s) \right\|_{\leb\infty} \ds \\
    &\sfed I_1 + I_2 + I_3.
  \end{align*}
  
  Firstly,
  due to the fundamental theorem of calculus,
  since $A^\frac12 \ure^{-t A} = \ure^{-t A} A^\frac12$ on $\mc D(A)$ for all $t \ge 0$,
  and because of
  \eqref{eq:v_time_hoelder_c2},
  \eqref{eq:v_time_hoelder_c1},
  the definition of $\theta$
  and \eqref{eq:cond_v0_p},
  we have therein
  \begin{align*}
          I_1
    &=    \left\| \int_{t_1}^{t_2} A \ure^{-s A} v_0 \ds \right\|_{\leb \infty} \\
    &\le  \int_{t_1}^{t_2} \left\|A^{\frac12} \ure^{-s A} A^\frac12 v_0 \right\|_{\leb \infty} \ds \\
    &\le  \lc{sg2} \|A^\frac12 v_0 \|_{\leb p} \int_{t_1}^{t_2} s^{-\frac12 - \frac{n}{2p} - \eps} \ds \\
    &\le  \frac{\lc{sg1} \lc{sg2} \|v_0\|_{\sob1p}}\theta (t_2 - t_1)^\theta
     \le  \frac{M \lc{sg1} \lc{sg2} |\Omega|^\frac{p_0-p}{p_0}}\theta (t_2 - t_1)^\theta,
  \end{align*}
  secondly, \eqref{eq:v_time_hoelder_c3}, the fundamental theorem of calculus and \eqref{eq:cond_g_p} imply
  \begin{align*}
          I_2
    &=    \int_{t_1}^{t_2} \left\| \ure^{-(t_2-s) A} g(\cdot, s) \right\|_{\leb \infty} \ds
     \le  \lc{sg3} \int_{t_1}^{t_2} (t_2-s)^{\theta-1} \left\|g(\cdot, s) \right\|_{\leb \qu} \ds
     \le  \frac{M \lc{sg3}}{\theta} (t_2 - t_1)^\theta
  \end{align*}
  and thirdly, from \eqref{eq:v_time_hoelder_c3}, the fundamental theorem of calculus, \eqref{eq:cond_g_p} and the fact that $t_2 \gt t_1$,
  we infer
  \begin{align*}
          I_3
    &=    \int_0^{t_1} \int_{t_1}^{t_2} \left\| A \ure^{-(\sigma-s) A} g(\cdot, s) \right\|_{\leb\infty} \dsigma \ds \\
    &\le  \lc{sg3} \int_0^{t_1} \int_{t_1}^{t_2} (\sigma-s)^{\theta-2} \|g(\cdot, s)\|_{\leb \qu} \dsigma \ds \\
    &\le  - \frac{M \lc{sg3}}{1-\theta} \int_0^{t_1} \left[ (t_2-s)^{\theta-1} - (t_1-s)^{\theta-1} \right] \ds \\
    &=    \frac{M \lc{sg3}}{\theta(1-\theta)} \left[ (t_2 - t_1)^\theta - t_2^\theta + t_1^\theta \right]
     \le  \frac{M \lc{sg3}}{\theta(1-\theta)} (t_2 - t_1)^\theta.
  \end{align*}
  Together, this implies \eqref{eq:v_time_hoelder:statement}.
\end{proof}

We now combine the estimates gathered above to prove Theorem~\ref{th:main_p}.
\mainspace{(0, R)}
\renewcommand{\con}[2][{[0, R]}]{\ensuremath{C^{#2}(#1)}}
\begin{lemma}\label{lm:nabla_v_pw}
  Let $M \gt 0$, $\qu \in (1, n]$, $\beta \gt \frac{n-\qu}{\qu}$ and $p_0 \gt \max\{\frac{n}{\beta}, 1\}$.
  There exists $C \gt 0$ such that whenever $T \in (0, \infty]$ and $v_0, g, v$ satisfy \eqref{eq:cond},
  then \eqref{eq:main_p:statement} holds.
\end{lemma}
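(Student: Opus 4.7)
The plan is to apply the transformation provided by Lemma~\ref{lm:z_eq}, turning the task of establishing a pointwise $r^{-\beta}$ bound on $v_r$ into a uniform-in-time $L^\infty$ bound for $z_r$ on $(0, R)$, where $z \defs \zeta^\beta \tilde v$ for a suitable cutoff $\zeta$ and with $\tilde v \defs v$ when $\qu \in (1, \tfrac n2]$ and $\tilde v \defs v - v(0, \cdot)$ when $\qu \in (\tfrac n2, n]$. After reducing to $\tau = 1$ and denoting by $A_1$ the Neumann realization of $-\partial_r^2 + 1$ on $(0, R)$ (as supplied by Lemma~\ref{lm:frac_powers}), the variation-of-constants representation
\[
  z(\cdot, t) = \ure^{-tA_1} z(\cdot, 0) + \int_0^t \ure^{-(t-s)A_1} h(\cdot, s) \ds,
  \qquad
  h \defs b_1 \tilde v + b_2 v_r + b_3 g - \left[\sign\!\left(\qu - \tfrac n2\right)\right]_+ \zeta^\beta v_t(0, \cdot),
\]
will be differentiated in $r$ and estimated term by term via the semigroup machinery of Section~\ref{sec:semigroup}.

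The initial contribution $\partial_r \ure^{-tA_1} z(\cdot, 0)$ is handled by Lemma~\ref{lm:linfty_linfty}: the summand $\zeta^\beta \partial_r v_0$ is directly controlled by the hypothesis $\||x|^\beta \nabla v_0\|_{L^\infty(\Omega)} \le M$, whereas the singular-looking $\beta \zeta^{\beta-1}\zeta_r \tilde v_0 \lesssim r^{\beta-1} |\tilde v_0|$ is covered by applying Lemma~\ref{lm:v_upper_est} (for $\qu \le \tfrac n2$) or Lemma~\ref{lm:v_hoelder} (for $\qu \gt \tfrac n2$) at $t = 0$---the admissible range of Hölder/decay exponents $\kappa$ reaches $1 - \beta$ precisely because $\beta \gt (n-\qu)/\qu$. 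For the three local source terms, the size conditions \eqref{eq:z_eq:b_est} combined with the pointwise $\tilde v$-bounds, the $W^{1, p}$-bound of Lemma~\ref{lm:v_sob1p}, the hypothesis \eqref{eq:cond_g_p} on $g$ (after passing to radial coordinates), and Hölder's inequality supply uniform-in-time $L^q((0, R))$ bounds on $b_1\tilde v + b_2 v_r + b_3 g$ for a suitable $q > 1$; Lemma~\ref{lm:semigroup_est}~(i) (with $N = 1$, $\sigma = 1$, $\mu = 0$, $\lambda = 0$) then yields an integrable time kernel and so a uniform bound for the corresponding part of $\|z_r(\cdot, t)\|_{\leb\infty}$.

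The genuine difficulty is the nonlocal source $\zeta^\beta v_t(0, \cdot)$ appearing for $\qu \in (\tfrac n2, n]$, since $v_t(0, \cdot)$ is only a distribution. Following the strategy from \cite{WangEtAlGlobalClassicalSolutions2018}, I would integrate by parts in $s$ and absorb the resulting boundary term via the identity $\int_0^t A_1 \ure^{-(t-s) A_1} \zeta^\beta \ds = \zeta^\beta - \ure^{-tA_1}\zeta^\beta$, obtaining
\[
  -\int_0^t \ure^{-(t-s)A_1}[\zeta^\beta v_t(0, s)] \ds
  = \ure^{-tA_1}\!\left[\zeta^\beta (v(0, 0) - v(0, t))\right]
    + \int_0^t A_1 \ure^{-(t-s)A_1}\!\left[\zeta^\beta (v(0, s) - v(0, t))\right] \ds.
\]
The time Hölder estimate of Lemma~\ref{lm:v_time_hoelder}, applicable because $p_0 > n/\beta$ enforces $p_0 > n$ in the regime where this term is relevant, furnishes $|v(0, s) - v(0, t)| \le C|t-s|^\theta$ for some $\theta \gt 0$. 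After applying $\partial_r$, this Hölder factor is what absorbs the temporal singularity of $\partial_r A_1 \ure^{-(t-s)A_1}\zeta^\beta$ coming from Lemma~\ref{lm:semigroup_est}~(i); the parameters $\lambda, q, s$ in that lemma have to be tuned close to the upper endpoints of the admissible ranges in order to push sufficient smoothness onto the factor $\zeta^\beta$ and keep the exponent of $(t-s)$ strictly greater than $-1 - \theta$. This reconciliation between the available Hölder exponent $\theta$ and the singularity produced by the semigroup is where I expect the main technical work to lie.

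Once $\|z_r(\cdot, t)\|_{\leb\infty}$ has been bounded uniformly in $t \in (0, T)$, the identity $z_r = \beta \zeta^{\beta-1}\zeta_r \tilde v + \zeta^\beta v_r$, which on $[0, \tfrac R2]$ reduces to $\beta r^{\beta-1}\tilde v + r^\beta v_r$, gives
\[
  |v_r(r, t)|
  \le r^{-\beta} \|z_r(\cdot, t)\|_{\leb\infty} + \beta r^{-1} |\tilde v(r, t)|
  \qquad \text{for all $r \in (0, \tfrac R2]$ and $t \in (0, T)$}.
\]
Choosing $\kappa \ge 1 - \beta$ in Lemma~\ref{lm:v_upper_est} (resp.\ Lemma~\ref{lm:v_hoelder})---admissible thanks to $\beta \gt (n-\qu)/\qu$---the second summand is also $\le Cr^{-\beta}$. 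On the outer annulus $[\tfrac R2, R]$, $|v_r|$ is directly bounded by combining Lemma~\ref{lm:v_sob1p} with the embedding $W^{1, p} \embed C^0$ for some $p \gt n$ (available because $p_0 \gt n/\beta$ and $\beta$ can be chosen so that this holds, or by invoking interior parabolic regularity away from the origin), so \eqref{eq:main_p:statement} holds throughout $\ol\Omega \times (0, T)$.
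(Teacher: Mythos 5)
Your proposal follows essentially the same route as the paper: reduce to $\tau = 1$, pass to $z = \zeta^\beta \tilde v$ via Lemma~\ref{lm:z_eq}, bound $\|z_r(\cdot,t)\|_{L^\infty}$ through the variation-of-constants formula using Lemma~\ref{lm:linfty_linfty} for the initial term, Lemma~\ref{lm:semigroup_est}~(i) with uniform $L^p((0,R))$ bounds on $b_1\tilde v + b_2 v_r + b_3 g$ for the local sources, and an integration by parts in $s$ combined with the time-H\"older estimate of Lemma~\ref{lm:v_time_hoelder} for the nonlocal term $\zeta^\beta v_t(0,\cdot)$. Your identity
$-\int_0^t \ure^{-(t-s)A}\zeta^\beta v_s(0,s)\ds = \ure^{-tA}[\zeta^\beta(v(0,0)-v(0,t))] + \int_0^t A\ure^{-(t-s)A}[\zeta^\beta(v(0,s)-v(0,t))]\ds$
is algebraically the same rearrangement the paper uses, and the exponent bookkeeping ($\theta$ close to $\frac{2\qu-n}{2\qu}$ together with $\frac{2\qu-n}{2\qu}+\frac\beta2>\frac12$, so that the kernel exponent stays above $-1$) is exactly where the paper places the technical weight.

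One step as written is wrong, though it is trivially repaired with material you already have. For the outer annulus $[\frac R2, R]$ you propose to bound $|v_r|$ via Lemma~\ref{lm:v_sob1p} and the embedding $\sob1p \embed \con0$, or via interior parabolic regularity. Neither works: $\sob1p \embed \con0$ controls $v$, not $v_r$, and interior regularity with $g$ merely in $L^\infty((0,T);\leb\qu)$, $\qu \le n$, does not yield a uniform gradient bound. The correct (and simpler) argument is the one the paper uses: the admissible $\zeta$ in \eqref{eq:z_eq:zeta} satisfies $\frac{r}{c} \le \zeta(r) \le c r$ on all of $(0,R)$, so the identity $\zeta^\beta v_r = z_r - \beta\zeta^{\beta-1}\zeta_r\tilde v$ together with your global bound on $\|z_r(\cdot,t)\|_{L^\infty(0,R)}$ and the pointwise bound on $\tilde v$ gives $|v_r(r,t)| \le C r^{-\beta}$ on the whole interval; there is no need to treat the annulus separately. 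A second, smaller imprecision: your claim that ``$p_0 > n/\beta$ enforces $p_0 > n$'' when $\qu > \frac n2$ requires first reducing without loss of generality to $\beta \in (0,1)$ in that regime (the paper states this reduction explicitly); otherwise Lemma~\ref{lm:v_time_hoelder}, which needs $p_0 > n$, might not be applicable.
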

\begin{proof}
  For $\qu \in (1, \frac n2]$ and $\qu \in (\frac n2, n]$, we assume without loss of generality $\beta \in [1, n)$ and $\beta \in (0, 1)$, respectively.
  Moreover, the assumptions on the parameters allow us to choose $\bar p \in (\max\{\frac n\beta, 1\}, \min\{\frac{n \qu}{n - \qu}, p_0\})$
  and
  \begin{align}\label{eq:nabla_v_pw:def_kappa}
    \kappa \in \left(1- \beta, \min\left\{\frac{2\qu-n}{\qu}, \frac{p_0-n}{p_0}\right\} \right).
  \end{align}
  
  Noting that $\bar p \gt \max\{\frac{n}{\beta}, 1\}$ and hence
  \begin{align*}
        \frac{(\beta - 1) \bar p - (n - 1)}{\bar p - 1}
    \gt \frac{1 - \bar p}{\bar p - 1}
    =   -1
  \end{align*}
  hold,
  that $\kappa \gt 1 - \beta$ implies $\beta - 2 + \kappa \gt - 1$
  and that the main assumption, $\beta \gt \frac{n - \qu}{\qu}$, asserts
  \begin{align*}
        \frac{\beta \qu - (n-1)}{\qu - 1}
    \gt \frac{-(\qu-1)}{\qu - 1}
    =   -1,
  \end{align*}
  we can find $p \in (1, \min\{\bar p, \qu\})$ such that still
  \begin{align}\label{eq:nabla_v_pw:lambda}
          \lambda_1
    \defs (\beta - 2 + \kappa) p
    \gt   -1,
    \quad
          \lambda_2
    \defs \frac{[(\beta - 1) \bar p - (n - 1)] p}{\bar p - p}
    \gt   -1
    \quad \text{and} \quad
          \lambda_3
    \defs \frac{[\beta \qu - (n-1)]p}{\qu - p}
    \gt   -1.
  \end{align}

  Letting now $A$ be as in Lemma~\ref{lm:frac_powers} with $G \defs (0, R)$,
  Lemma~\ref{lm:linfty_linfty} and Lemma~\ref{lm:semigroup_est}~(i)
  allow us to fix $\newlc{sg1}, \newlc{sg2} \gt 0$ and $\delta_1 \gt 0$ such that
  \begin{alignat}{2}\label{eq:nabla_v_pw:c_1}
          \|\partial_r \ure^{-\tau A} \varphi\|_{\leb\infty}
    &\le  \lc{sg1} \ure^{-\tau} \|\varphi_r\|_{\leb\infty}
    &&\qquad \text{for all $\varphi \in \sob1\infty$ and all $\tau \gt 0$}
  \intertext{and}\label{eq:nabla_v_pw:c_2}
          \|\partial_r \ure^{-\tau A} \varphi\|_{\leb\infty}
    &\le  \lc{sg2} \tau^{\gamma_1} \ure^{-\delta_1 \tau} \|\varphi\|_{\leb p}
    &&\qquad \text{for all $\varphi \in \leb[(0, R)]p$ and all $\tau \gt 0$},
  \end{alignat}
  where $\gamma_1 \defs -\frac12 - \frac{p+1}{4p}$.
  (We note that $\frac{p+1}{2p} \gt \frac1p$ because of $p \gt 1$, so that Lemma~\ref{lm:semigroup_est} is indeed applicable.)
  Since $p \gt 1$, we have $\gamma_1 \gt -1$ and hence
  \begin{align}\label{eq:nabla_v_pw:second_int}
          \newlc{second_int} 
    \defs \sup_{t \in (0, \infty)} \int_0^t (t-s)^{\gamma_1} \ure^{-\delta_1 (t-s)} \ds 
    =     \int_0^\infty s^{\gamma_1} \ure^{-\delta_1 s} \ds 
    \lt   \infty.
  \end{align}
 
  Moreover, by Lemma~\ref{lm:v_sob1p}, Lemma~\ref{lm:v_upper_est} and Lemma~\ref{lm:v_hoelder},
  there are $\newlc{v_w1p}, \newlc{v_pw} \gt 0$ such that
  \begin{align}\label{eq:nabla_v_pw:v_w1p_pw}
    \|\nabla v(\cdot, t)\|_{\leb[\Omega]{\bar p}} \le \lc{v_w1p}
    \quad \text{and} \quad
    |\tilde v(x, t)| \le \lc{v_pw} |x|^{\kappa}
    \qquad \text{for all $x \in \Omega$ and $t \in [0, T)$},
  \end{align}
  whenever \eqref{eq:cond} is fulfilled and
  where $\tilde v$ is given by \eqref{eq:z_eq:def_v_tilde}.

  If $\qu \in (\frac n2, n]$,
  due to $\frac{2\qu-n}{2\qu} + \frac{\beta}{2} \gt \frac{2\qu-n}{2\qu} + \frac{n-\qu}{2\qu} = \frac12$,
  we may also choose $\eps \in (0, 2)$ and $\theta \in (0, \frac{2\qu - n}{2\qu})$ sufficiently small and large, respectively, such that
  \begin{align*}
    \gamma_2 \defs \theta + \frac{\beta}{2} - \frac32 - \eps \gt - 1.
  \end{align*}
  Since $\qu \in (\frac n2, n]$ implies $\beta \in (0, 1)$,
  an application of Lemma~\ref{lm:semigroup_est}~(ii) then yields $\newlc{sg3} \gt 0$ and $\delta_2 \gt 0$ such that for $\mu \in \{0, 1\}$,
  \begin{alignat}{2}\label{eq:nabla_v_pw:sg3}
          \|\partial_r A^\mu \ure^{-\tau A} \varphi\|_{\leb\infty}
    &\le  \lc{sg3} \tau^{\frac{\beta}{2} - \mu - \frac12 - \eps} \ure^{-\delta_2 \tau} \|\varphi\|_{\con\beta}
    &&\qquad \text{for all $\varphi \in \sob1\infty$ and all $\tau \gt 0$}.
  \end{alignat}
  Furthermore, again only in the case $\qu \in (\frac n2, n]$,
  Lemma~\ref{lm:v_time_hoelder} allows us to fix $\newlc{v_hoelder_time} \gt 0$ such that
  \begin{alignat}{2}\label{eq:nabla_v_pw:v_hoelder_time}
          |v(0, t_2) - v(0, t_1)|
    &\le  \lc{v_hoelder_time} |t_2 - t_1|^\theta
    &&\qquad \text{for all $t_1, t_2 \in (0, T)$}
  \end{alignat}
  and (provided $\qu \in (\frac n2, n]$) we set
  \begin{align}\label{eq:nabla_v_pw:last_int}
          \newlc{last_int}
    \defs \int_0^\infty s^{\gamma_2} \ure^{-\delta_2 s} \ds 
          + \sup_{t \in (0, \infty)} t^{\gamma_2 + 1} \ure^{-\delta_2 t}
    \lt   \infty.
  \end{align}

  As a last preparation, regardless of the sign of $\qu - \frac n2$,
  we fix an arbitrary $\zeta$ as in \eqref{eq:z_eq:zeta}.
  Hence there are $\newlc{zeta}, \newlc{zeta_r}, \newlc{zeta_hoelder} \gt 0$ with
  \begin{align}\label{eq:nabla_v_pw:zeta}
    \frac{r}{\lc{zeta}} \le \zeta(r) \le \lc{zeta} r,
    \quad
    |\zeta_r(r)| \le \lc{zeta_r}
    \quad \text{and} \quad
    \|\zeta^\beta\|_{\con\beta} \le \lc{zeta_hoelder}
    \qquad \text{for all $r \in (0, R)$}
  \end{align}
  and, by Lemma~\ref{lm:z_eq}, there is moreover $\newlc{b} \gt 0$ such that \eqref{eq:z_eq:b_est} holds (with $C$ replaced by $\lc{b}$),
  where $b_1, b_2, b_3$ are also given by Lemma~\ref{lm:z_eq}.

  We suppose now \eqref{eq:cond}.
  Noting that $\beta \gt \frac{n-\qu}{\qu}$,
  we may infer from Lemma~\ref{lm:z_eq} that $z \defs \zeta^\beta \tilde v$ is a classical solution of \eqref{eq:z_eq:prob}.
  By the variation-of-constants formula, we may therefore write
  \begin{align*}
          \|z_r(\cdot, t)\|_{\leb \infty}
    &\le  \|\partial_r \ure^{-t A} z(\cdot, 0)\|_{\leb \infty} \\
    &\pe  + \int_0^t \| \partial_r \ure^{-(t-s) A} [b_1 \tilde v(\cdot, s) + b_2 v_r(\cdot, s) + b_3 g(\cdot, s)] \|_{\leb \infty} \ds \\
    &\pe  + \left[\sign\left(\qu - \frac n2\right)\right]_+ \int_0^t \| \partial_r \ure^{-(t-s)A} \zeta^\beta v_t(0, s)\|_{\leb \infty} \ds \\
    &\sfed I_1(t) + I_2(t) + I_3(t)
    \qquad \text{for $t \in (0, T)$}.
  \end{align*}

  Next, we estimate the terms $I_1$--$I_3$ therein.
  Starting with the first one,
  we apply \eqref{eq:nabla_v_pw:c_1}, \eqref{eq:nabla_v_pw:zeta}, \eqref{eq:nabla_v_pw:v_w1p_pw} \eqref{eq:cond_v0_p}
  and \eqref{eq:nabla_v_pw:def_kappa} to obtain
  \begin{align}\label{eq:nabla_v_pw:i1}
          I_1(t)
    &\le  \lc{sg1} \ure^{-t} \|(\zeta^\beta \tilde v(\cdot, 0))_r\|_{\leb\infty} \notag \\
    &\le  \lc{sg1} \left(
            \|\zeta^\beta v_{0r}\|_{\leb\infty}
            + \beta \|\zeta^{\beta-1} \zeta_r \tilde v(\cdot, 0)\|_{\leb\infty}
          \right) \notag \\
    &\le  \lc{sg1} \left(
            \lc{zeta} \|r^\beta v_{0r}\|_{\leb\infty}
            + \lc{v_pw} \lc{zeta}^{|\beta-1|} \lc{zeta_r} \beta \|r^{\beta-1+\kappa}\|_{\leb\infty}
          \right) \notag \\
    &\le  \lc{sg1} \left(
            \lc{zeta} M
            + \lc{v_pw} \lc{zeta}^{|\beta-1|} \lc{zeta_r} \beta R^{\beta+\kappa-1}
          \right)
    \qquad \text{for $t \in (0, T)$}.
  \end{align}

  By \eqref{eq:nabla_v_pw:c_2}, we moreover have 
  \begin{align}\label{eq:nabla_v_pw:i2}
          I_2(t)
    &\le  \lc{sg2} \int_0^t (t-s)^{\gamma_1} \ure^{-(t-s) \delta_1} \|b_1 \tilde v(\cdot, s) + b_2 v_r(\cdot, s) + b_3 g(\cdot, s)\|_{\leb[(0, R)]p} \ds
    \qquad \text{for $t \in (0, T)$}.
  \end{align}
  Therein are
  \begin{align}
          \|b_1 \tilde v(\cdot, s)\|_{\leb{p}}^{p}
    &\le  \lc{b}^{p} \int_0^R r^{(\beta - 2)p} (\tilde v)^{p}(r, s) \dr
     \le  \lc{v_pw}^{p} \lc{b}^{p} \int_0^R r^{\lambda_1} \dr
     =    \lc{v_pw}^{p} \lc{b}^{p} \frac{R^{\lambda_1+1}}{\lambda_1+1}
     \lt  \infty, \label{eq:nabla_v_pw:b_1} \\
          \|b_2 v_r(\cdot, s)\|_{\leb{p}}^{p} 
    &\le  \lc{b}^{p} \int_0^R \left( r^{n-1} |v_r(r, s)|^{\bar p} \right)^\frac{p}{\bar p} r^\frac{[(\beta-1) \bar p - (n-1)] p}{\bar p} \dr \notag \\
    &\le  \frac{\lc{b}^{p} \|\nabla v(\cdot, s)\|_{\leb[\Omega]{\bar p}}^{p}}{\omega_{n-1}}
            \left( \int_0^R r^{\lambda_2} \dr \right)^\frac{\bar p-p}{\bar p}
     \le  \frac{\lc{v_w1p}^{p} \lc{b}^{p}}{\omega_{n-1}} \left( \frac{R^{\lambda_2+1}}{\lambda_2+1} \right)^\frac{\bar p-p}{\bar p}
     \lt  \infty \label{eq:nabla_v_pw:b_2}
    \intertext{and}
          \|b_3 g(\cdot, s)\|_{\leb{p}}^{p}
    &\le  \lc{b}^{p} \int_0^R \left( r^{n-1} g^\qu(r, s) \right)^\frac{p}{\qu} r^{\frac{[\beta \qu - (n-1)]p}{\qu}} \dr \notag \\
    &\le  \frac{\lc{b}^{p} \|g(\cdot, s)\|_{\leb[\Omega]\qu}^{p}}{\omega_{n-1}} \left( \int_0^R r^{\lambda_3} \dr \right)^\frac{\qu-p}{\qu}
     \le  \frac{M^p \lc{b}^{p}}{\omega_{n-1}} \left( \frac{R^{\lambda_3+1}}{\lambda_3+1} \right)^\frac{\qu-p}{\qu}
     \lt  \infty \label{eq:nabla_v_pw:b_3}
  \end{align}
  for all $s \in (0, T)$ by \eqref{eq:z_eq:b_est}, \eqref{eq:nabla_v_pw:v_w1p_pw}, \eqref{eq:cond_g_p} and \eqref{eq:nabla_v_pw:lambda}.
  Combining \eqref{eq:nabla_v_pw:i2} with \eqref{eq:nabla_v_pw:second_int} and \eqref{eq:nabla_v_pw:b_1}--\eqref{eq:nabla_v_pw:b_3} yields then
  \begin{align}\label{eq:nabla_v_pw:i2_final}
        I_2(t)
    \le \lc{sg2} \lc{second_int} \lc{b} \left(
          \lc{v_pw} \left( \frac{R^{\lambda_1+1}}{\lambda_1+1} \right)^\frac1p
          + \frac{\lc{v_w1p}}{\sqrt[p]{\omega_{n-1}}} \left( \frac{R^{\lambda_2+1}}{\lambda_2+1} \right)^\frac{\bar p-p}{p \bar p}
          + \frac{M}{\sqrt[p]{\omega_{n-1}}} \left( \frac{R^{\lambda_3+1}}{\lambda_3+1} \right)^\frac{\qu-p}{p \qu}
        \right)
    \qquad \text{for all $t \in (0, T)$}.
  \end{align}

  Moreover, 
  as $[\sign(\qu - \frac n2)]_+ = 0$ for $\qu \le \frac n2$,
  for estimating $I_3$ we may assume $\qu \gt \frac n2$ (and hence make use of \eqref{eq:nabla_v_pw:sg3}--\eqref{eq:nabla_v_pw:last_int}).
  Using linearity of $\ure^{\tau A}$ for $\tau \gt 0$,
  integrating by parts and applying \eqref{eq:nabla_v_pw:v_hoelder_time}, \eqref{eq:nabla_v_pw:sg3} and \eqref{eq:nabla_v_pw:last_int},
  we then obtain
  \begin{align}\label{eq:nabla_v_pw:i3}
          I_3(t)
    &=    \left\| \int_0^t \partial_r \ure^{-(t-s)A} \left( \zeta^\beta \partial_s v(0, s) \right) \ds \right\|_{\leb\infty} \notag \\
    &=    \left\| \int_0^t \partial_s [v(0, s) - v(0, t)] \partial_r \ure^{-(t-s)A} \zeta^\beta \ds \right\|_{\leb\infty} \notag \\
    &\le  \left\| \int_0^t [v(0, s) - v(0, t)] \partial_r \partial_s \ure^{-(t-s)A} \zeta^\beta \ds \right\|_{\leb\infty}
          + \left\| \left[ [v(0, s) - v(0, t)] \partial_r \ure^{-(t-s)A} \zeta^\beta \right]_{s=0}^{s=t} \right\|_{\leb\infty} \notag \\
    &\le  \lc{v_hoelder_time} \int_0^t (t-s)^\theta \left\| \partial_r A \ure^{-(t-s)A} \zeta^\beta \right\|_{\leb\infty} \ds 
          + \lc{v_hoelder_time} t^\theta \left\|  \partial_r \ure^{-tA} \zeta^\beta \right\|_{\leb\infty} \notag \\
    &\le  \lc{sg3} \lc{v_hoelder_time} \left(
            \int_0^t s^{\theta + \frac{\beta}{2} - \frac32 - \eps} \ure^{-\delta_2 s} \ds 
            + t^{\theta + \frac{\beta}{2} - \frac12 - \eps} \ure^{-\delta_2 t}
          \right) \|\zeta^\beta\|_{\con\beta} \notag \\
    &\le  \lc{sg3} \lc{v_hoelder_time} \lc{last_int} \lc{zeta_hoelder} 
    \qquad \text{for all $t \in (0, T)$}.
  \end{align}

  Combining \eqref{eq:nabla_v_pw:i1}, \eqref{eq:nabla_v_pw:i2_final} and \eqref{eq:nabla_v_pw:i3}
  shows that $\|z\|_{L^\infty((0, R) \times (0, T))} \le \newlc{z_infty}$
  for some $\lc{z_infty} \gt 0$ only depending on $\Omega$, $M$, $\qu$, $\beta$ and $p_0$.
  Thus, due to the definitions of $\tilde v$ and $z$, \eqref{eq:nabla_v_pw:v_w1p_pw}, \eqref{eq:nabla_v_pw:zeta} and \eqref{eq:nabla_v_pw:def_kappa},
  \begin{align*}
          |v_r(r, t)|
    &=    |\tilde v_r(r, t)| \\
    &=    |\zeta^{-\beta}(r) z_r(r, t) - \beta \zeta^{-\beta-1}(r) \zeta_r(r) z(r, t)| \\
    &\le  \zeta^{-\beta}(r) |z_r(r, t)| + \beta \zeta^{-1}(r) |\zeta_r(r)| |\tilde v(r, t)| \\
    &\le  \lc{zeta}^\beta \lc{z_infty} r^{-\beta} + \lc{v_pw} \lc{zeta} \lc{zeta_r} \beta r^{-1 + \kappa} \\
    &\le  \left( \lc{zeta}^\beta \lc{z_infty} + \lc{v_pw} \lc{zeta} \lc{zeta_r} \beta R^{\beta + \kappa - 1} \right) r^{-\beta}
    \qquad \text{holds for all $(r, t) \in (0, R) \times (0, T)$},
  \end{align*}
  so that we finally arrive at \eqref{eq:main_p:statement}.
\end{proof}

\section{Proofs of the main theorems}
Finally, let us prove Proposition~\ref{prop:main_e}, Theorem~\ref{th:main_p} and Theorem \ref{th:pw_ks}.
\begin{proof}[Proof of Proposition~\ref{prop:main_e} and Theorem~\ref{th:main_p}]
  The corresponding statements have been shown
  in Lem\-ma~\ref{lm:ell_vr_est} and Lemma~\ref{lm:nabla_v_pw}.
\end{proof}

\begin{proof}[Proof of Theorem~\ref{th:pw_ks}]
  For $\pu = 1$, this has already been shown in \cite[Theorem~1.3]{FuestBlowupProfilesQuasilinear2020}.
  Moreover, in the case of $\pu \gt 1$,
  we set $\qu \defs \frac{\pu}{s}$ as well as $g(x, t) \defs f(u(x, t), v(x, t))$ for $x \in \Omega$ and $t \in (0, T)$
  and, for $\alpha \gt \frac{n (ns - \pu)}{[(m - q) n + \pu] \pu} = \frac{\frac{n - \qu}{\qu}}{m - q + \frac{\pu}{n}}$,
  we choose $\tilde \beta \gt \frac{n - \qu}{\qu} = \frac{ns - \pu}{\pu}$ as well as $\theta \gt n$
  such that $\alpha \ge \frac{\tilde \beta}{(m - q) + \frac{\pu}{n} - \frac{\pu}{\theta}}$
  and $m-q \in (\frac{p}{\theta} - \frac{p}{n}, \frac{p}{\theta} + \frac{\tilde \beta \pu - \pu}{n}]$.
  Since we may without loss generality assume $\beta \le \tilde \beta$,
  the statement follows immediately from Theorem~\ref{th:main_p} and \cite[Theorem~1.1]{FuestBlowupProfilesQuasilinear2020}
\end{proof}

\section*{Acknowledgments}
The author is partially supported by the German Academic Scholarship Foundation
and by the Deutsche Forschungsgemeinschaft within the project \emph{Emergence of structures and advantages in
cross-diffusion systems}, project number 411007140.

\footnotesize

\end{document}